\documentclass[oneside,11pt,reqno]{amsart}

\usepackage{amssymb,amsmath}
\usepackage{amssymb,latexsym}
\usepackage{amsfonts,xcolor}
\usepackage{amssymb}
\usepackage{stmaryrd}
\usepackage{longtable}
\usepackage{amsmath, geometry, amssymb} % see geometry.pdf on how to lay out the page. There's lots.
\numberwithin{equation}{section}
\geometry{a4paper} % or letter or a5paper or ... etc
\pagestyle{plain}
% See the ``Article customise'' template for come common customisations
\newtheorem{theorem}{Theorem}[section]
\newtheorem{lemma}[theorem]{Lemma}

\newtheorem{proposition}[theorem]{Proposition}

\def\bals#1\nals{\begin{align*}#1\end{align*}}
\def\bal#1\nal{\begin{align}#1\end{align}}

\newcommand{\pmd}{\hspace{-3mm} \pmod}

\newcommand{\qu}[2]{\Bigl({\frac{#1}{#2}}\Bigr)_{\mkern-6.2mu K} }
\newcommand{\dqu}[2]{\ds{\qu{#1}{#2}}}

\def \ds{\displaystyle}

\newcommand{\nn}{\mathbb{N}}
\newcommand{\zz}{\mathbb{Z}}
\newcommand{\qq}{\mathbb{Q}}
\newcommand{\cc}{\mathbb{C}}

\newcommand{\hh}{\mathbb{H}}

%%Khoa's new commands below
\newcommand{\Z}{\mathbb{Z}}
\newcommand{\cV}{\mathcal{V}}
\newcommand{\cU}{\mathcal{U}}
\newcommand{\N}{\mathbb{N}}
\newcommand{\Q}{\mathbb{Q}}
\DeclareMathOperator{\Gal}{Gal}
%\renewcommand{\theequation}{\arabic{section}.\arabic{equation}}
%\allowdisplaybreaks

%%%%%%%%%%%%%%%%%%%%%%%%%%%%%%%%%%%%%%%%%%%%%%%%%%%%%%%%%%%%%%%%%
%%%%%%%%%%%%%%%%%%%%%%%%%%%%%%%%%%%%%%%%%%%%%%%%%%%%%%%%%%%%%%%%%

\begin{document}

\title{N-colored generalized Frobenius partitions: Generalized Kolitsch identities}

%\title{A basis of eta quotients for the space $S_{2k} (\Gamma_0 (12)$ of cusp forms of weight $2k$ and level $12$, 
%and an extension of Ramanujan-Mordell formula}

\author{Zafer Selcuk Aygin}
\address{
Zafer Selcuk Aygin \\
Department of Mathematics and Statistics\\
University of Calgary\\
AB T2N 1N4, Canada
}
\email{selcukaygin@gmail.com}

\author{Khoa D.~Nguyen}
\address{
Khoa D.~Nguyen \\
Department of Mathematics and Statistics\\
University of Calgary\\
AB T2N 1N4, Canada
}
\email{dangkhoa.nguyen@ucalgary.ca}

\begin{abstract}
Let $N\geq 1$ be squarefree with $(N,6)=1$. Let $c\phi_N(n)$ denote the number of $N$-colored generalized Frobenius partition of $n$ introduced by Andrews in 1984. We prove
$$
c\phi_N(n)= \sum_{d \mid N} N/d \cdot P\left( \frac{ N}{d^2}n  - \frac{N^2-d^2}{24d^2} \right) + b(n)$$
where
$C(z) := (q;q)^N_\infty\sum_{n=1}^{\infty} b(n) q^n$
is a cusp form in $S_{(N-1)/2} (\Gamma_0(N),\chi_N)$. This extends and strengthens earlier results of
Kolitsch and Chan-Wang-Yan treating the case when $N$ is a prime. As an immediate application, we obtain an asymptotic formula for $c\phi_N(n)$ in terms of the classical partition function. 
\end{abstract}

\maketitle

%\keywords{Dedekind eta function; eta quotients; eta products;  theta functions; Eisenstein series; Eisenstein forms; modular forms; cusp forms; Fourier coefficients; Fourier series.}
%\noindent
%Mathematics Subject Classification:  11F11, 11F20,  11F27, 11E20, 11E25, 11F30

%%% Section 1

\section{Introduction}

Let $\nn$, $\nn_0$, $\zz$, $\qq$, $\cc$ and $\hh$ denote the sets of positive integers, non-negative integers, integers, rational numbers, complex numbers and upper half plane of complex numbers, respectively. Throughout the paper we denote $q=e^{2 \pi i z}$, where $z \in \hh$.

In 1984, Andrews \cite{andrews} introduced the function
$c\phi_N(n)$
counting the number of $N$-colored generalized Frobenius partition of $n$ with $N\in\N$ and $n\in\N_0$. The generating function of $c\phi_N(n)$ is denoted by
\bals
C\Phi_N(q):= \sum_{n = 0}^{\infty} c\phi_N(n)q^n.
\nals
Andrews \cite{andrews} determined $C\Phi_N(q)$ in terms of a theta function divided by an infinite product, as follows. Let
\bals
\theta_{N} (x) :=  \sum_{i = 1 }^{N} x_i^2 + \sum_{1 \leq i < j \leq N} x_i x_j.
\nals
be a quadratic form in $N$ variables, and
\bals
f_{\theta_{N}}(z):= \sum_{ x \in \zz^{N} } q^{\theta_{N}(x)},
\nals
be the associated theta function. Then by \cite[Theorem~5.2]{andrews}, we have
\bals
C\Phi_N(z) = \frac{f_{\theta_{N-1}}(z)}{(q;q)_{\infty}^N},
\nals
where
\bals
(q;q)_{\infty}=\prod_{n \geq 1} (1-q^n).
\nals

There has been a plethora of research concerning the congruence properties of $c\phi_N(n)$, we leave the discussion of this topic and related results to \cite{wangpaper} and its references. In this paper we shall investigate relations between $c\phi_N(n)$ and $P(n)$, where $P(n)$ denotes the number of partitions of $n$. 
From the description of $c\phi_N(n)$ in \cite{andrews} or \cite{wangpaper}, it is trivial to deduce
\bals
c\phi_1(n)=P(n).
\nals
In \cite{kolitsch,kol2}, Kolitsch has shown rather surprising relationships between these two types of partitions which are stated below.
\begin{theorem}[Kolitsch 1991] \label{kolth}
For all $n \in \nn_0$ we have
\bal
c\phi_5(n)= 5P(5n-1) + P(n/5), \label{kol1}
\nal
\bal
c\phi_7(n)= 7P(7n-2) + P(n/7), \label{kol2}
\nal
and
\bal
c\phi_{11}(n)= 11P(11n-5) + P(n/11). \label{kol3} %???check this could be in Kolitsch
\nal
\end{theorem}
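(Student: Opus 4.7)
The plan is to recast each of Kolitsch's three identities as an equality of modular forms on $\Gamma_0(p)$ with the Legendre character $\chi_p$, and then to deduce it from the vanishing of an appropriate cusp-form space at the three sporadic primes $5,7,11$.

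First, I would multiply both sides by $(q;q)_\infty^p$. By Andrews' formula, the left-hand side becomes the theta function $f_{\theta_{p-1}}(z)$, which lies in $M_{(p-1)/2}(\Gamma_0(p),\chi_p)$ since the Gram matrix $I+J$ of the quadratic form $\theta_{p-1}$ on $\zz^{p-1}$ has determinant $p$. On the right-hand side, the term $\sum_n P(n/p) q^n \cdot (q;q)_\infty^p$ collapses to the eta quotient $\eta(z)^p/\eta(pz)$, and the term $p\sum_n P(pn-(p^2-1)/24)q^n \cdot (q;q)_\infty^p$, after the reindexing $m=n-1$, equals $pq^{(p^2-1)/24}(q;q)_\infty^p \sum_{m\geq 0} P(pm+\delta_p)q^m$ with $\delta_p=p-(p^2-1)/24\in\{4,5,6\}$ for $p=5,7,11$. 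For each such $p$ one has a classical Ramanujan--Atkin identity expressing $\sum_m P(pm+\delta_p)q^m$ as an explicit polynomial in eta products, and a short calculation then exhibits both sides of Kolitsch's identity as elements of $M_{(p-1)/2}(\Gamma_0(p),\chi_p)$, with the modularity of the right-hand eta products guaranteed by Ligozat's criterion.

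At this point the difference of the two sides is a cusp form: it is $O(q)$ at the cusp $\infty$ by matching principal parts, and it also vanishes at the cusp $0$ via a Fricke-involution calculation using the standard transformation laws for $\eta$ and for theta series. Hence the difference lies in $S_{(p-1)/2}(\Gamma_0(p),\chi_p)$, and a dimension computation yields $\dim S_{(p-1)/2}(\Gamma_0(p),\chi_p)=0$ for $(p,k)\in\{(5,2),(7,3),(11,5)\}$, which forces the difference to vanish identically and concludes the proof.

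I expect the main obstacle to be the clean eta-product identification of $p\sum_n P(pn-(p^2-1)/24)q^n \cdot (q;q)_\infty^p$, which is immediate for $p=5$ via Ramanujan's own identity $\sum_m P(5m+4)q^m = 5(q^5;q^5)_\infty^5/(q;q)_\infty^6$ but requires a two-term identity for $p=7$ and Atkin's more intricate decomposition for $p=11$. Modulo that bookkeeping the argument is a textbook dimension collapse, and this is precisely the phenomenon that the paper's main theorem isolates and generalises to composite $N$.
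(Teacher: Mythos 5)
Your reduction --- multiply by $(q;q)_\infty^p$, recognize the left side as $f_{\theta_{p-1}}(z)\in M_{(p-1)/2}(\Gamma_0(p),\chi_p)$, write the right side as explicit eta quotients via the Ramanujan--Atkin identities for $\sum_m P(pm+\delta_p)q^m$, and show the difference is a cusp form --- is sound and is close in spirit to how this paper obtains \eqref{kol1}--\eqref{kol3} as corollaries of Theorem \ref{mainth} (Kolitsch's own proof instead goes through the $t$-core generating function identities of Garvan--Kim--Stanton). But your final step contains a genuine error: it is \emph{not} true that $\dim S_{(p-1)/2}(\Gamma_0(p),\chi_p)=0$ for $p=7$ and $p=11$. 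For $p=7$ the space $S_{3}(\Gamma_0(7),\chi_7)$ is one-dimensional, spanned by the CM form $\eta(z)^3\eta(7z)^3$ (weight $3$, character $\left(\frac{-7^3}{\cdot}\right)=\left(\frac{-7}{\cdot}\right)=\chi_7$, vanishing at both cusps), and for $p=11$ the space $S_{5}(\Gamma_0(11),\chi_{11})$ has dimension $3$. Only the case $p=5$ genuinely collapses, since $S_2(\Gamma_1(5))=0$. So for $7$ and $11$ the argument as written does not force the difference to vanish.

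The fix is cheap but necessary: replace the dimension claim by Sturm's theorem. The difference is a cusp form in $S_{k}(\Gamma_0(p),\chi_p)$ with $k=(p-1)/2$, so it vanishes identically once its Fourier coefficients vanish up to the Sturm bound $\frac{k}{12}[\mathrm{SL}_2(\zz):\Gamma_0(p)]=\frac{k(p+1)}{12}$, i.e.\ up to $q^1$, $q^2$, $q^5$ for $p=5,7,11$ respectively; these finitely many coefficients must then actually be checked (e.g.\ $c\phi_p(1)=p^2$ matches $p\,P(\delta_p)$ in each case). This is exactly the route the paper indicates: Theorem \ref{mainth} exhibits the discrepancy as a cusp form $C(z)\in S_{(N-1)/2}(\Gamma_0(N),\chi_N)$, and ``Using Sturm Theorem one observes that in the cases $N=5,7$ and $11$ we have $C(z)=0$.'' Note also that once you invoke Sturm you no longer need the Ramanujan--Atkin eta-product evaluations of $\sum_m P(pm+\delta_p)q^m$ at all --- which you correctly flagged as the painful part for $p=11$ --- since it suffices to know that the right-hand side is a modular form of the correct weight, level and character with the correct constant terms at the two cusps.
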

The proof of these beautiful identities relies on a $q$-series identities from \cite[eqs.(2.2) and (3.1)]{tcoregarvan} that relates the generating function of t-cores to theta series.

Very recently in \cite{wangpaper}, Chan, Wang and Yan have discovered the following more general relationships between $c\phi_p(n)$ and $P(n)$. Below, noting that the Dedekind eta function is defined by $\eta(z)=q^{1/24} (q;q)_{\infty}$, we restate main aspects of their Theorem 4.1.
\begin{theorem}[Chan-Wang-Yan 2019] \label{wangth} For all $n \in \nn_0$, we have
\bal
c\phi_{13}(n)= 13P(13n-7) + P(n/13) + a(n), \label{kol4}
\nal
where 
$\displaystyle q \frac{(q^{13};q^{13})_\infty}{(q;q)^2_\infty} = \sum_{n=1}^{\infty} a(n) q^n$.
When $p \geq 17$ is a prime then we have 
\bals
\sum_{n = 0}^\infty \left(c\phi_{p}(n) - p \cdot P\left(pn - \frac{p^2-1}{24}\right) - P(n/p)\right)q^n =
\frac{h_p(z) + 2 p^{(p-11)/2}  (\eta(pz)/\eta(z))^{p-11}}{(q^p;q^p)_{\infty}},
\nals
where $h_p(z)$ is a modular function on $\Gamma_0(p)$ with a zero at $\infty$ and a pole of order $(p + 1)(p - 13)/24$ at $0$. Additionally, the function
$\displaystyle h_p(z) (\eta(z)\eta(pz))^{p-13}
$
is a holomorphic modular form of weight $p - 13$ with a zero of order
$(p - 1)(p - 11)/24$ at $\infty$ and $h_p(z)$ is congruent to $p$ times a cusp form on $\Gamma_0(1)$ of weight $p-1$ modulo $p^2$.
\end{theorem}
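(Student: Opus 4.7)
The plan is to translate the identity into a modular forms statement. Applying Andrews' formula and multiplying through by $(q;q)_\infty^p$, the LHS becomes the theta series $f_{\theta_{p-1}}(z)$, which lies in $M_{(p-1)/2}(\Gamma_0(p),\chi_p)$ for a quadratic character $\chi_p$ of conductor $p$; this follows because the Gram matrix $A=I+J$ of $\theta_{p-1}$ has $\det A=p$ and level $p$ (using that $p$ is odd). For the RHS, the summand $P(n/p)$ contributes $1/(q^p;q^p)_\infty$, which after multiplication by $(q;q)_\infty^p$ becomes the eta quotient $\eta(z)^p/\eta(pz)$, already in $M_{(p-1)/2}(\Gamma_0(p),\chi_p)$. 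The summand $P(pn-(p^2-1)/24)$ corresponds to the ``integer-exponent projection'' of $q^{p/24}/\eta(z/p)$, realized as the trace from a larger congruence subgroup down to $\Gamma_0(p)$ via averaging over the translations $z\mapsto z+j$ for $j=0,\dots,p-1$; after multiplication by $(q;q)_\infty^p$ this also lies in $M_{(p-1)/2}(\Gamma_0(p),\chi_p)$.

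With both sides interpreted as modular forms in the same space, the task reduces to showing that the difference is a cusp form. Since $\Gamma_0(p)$ has exactly two cusps, $\infty$ and $0$, this amounts to two constant-term computations. At $\infty$, constants are read off the $q$-expansion directly; at $0$, they are computed after applying the Fricke involution. The local densities of $\theta_{p-1}$ at the bad prime $p$ determine the constant terms of $f_{\theta_{p-1}}$ at each cusp, and the weights $p$ and $1$ in the formula are exactly those needed to match. Hence the difference $C_p(z)$ is a cusp form in $S_{(p-1)/2}(\Gamma_0(p),\chi_p)$, proving existence of the formula up to a cusp form.

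To identify $C_p$ explicitly, I would use known bases of the cuspidal space. For $p=13$, $\dim S_6(\Gamma_0(13),\chi_{13})=1$ with basis $\eta(z)^{11}\eta(13z)$; matching one Fourier coefficient of $C_{13}$ pins down the scalar and yields the formula for $a(n)$. For $p\ge 17$, the authors write $C_p$ in the form $(h_p(z)+2p^{(p-11)/2}(\eta(pz)/\eta(z))^{p-11})/(q^p;q^p)_\infty$: the explicit eta quotient absorbs a ``leading'' cusp form contribution, while $h_p$ is defined as the remainder. The weight of $h_p(z)(\eta(z)\eta(pz))^{p-13}$ and its zero order at $\infty$ are then forced by balancing those of the other pieces.

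The main obstacle is the congruence $h_p(z)\equiv p\cdot(\text{$\mathrm{SL}_2(\mathbb{Z})$ cusp form of weight } p-1)\pmod{p^2}$. This requires a Serre-style $p$-adic analysis: one exploits the Frobenius-type congruence $\eta(z)^p\equiv\eta(pz)\pmod p$ (Fermat's little theorem applied termwise to $(q;q)_\infty^p$) to track how the $p$-power factors in $(\eta(pz)/\eta(z))^{p-11}$ collapse modulo $p^2$ to a multiple of a full-level cusp form. This step combines explicit $q$-expansion manipulations with the theory of $p$-adic modular forms and is the technical heart of the argument, generalizing the dimension-count shortcut available for $p=5,7,11$ where $S_{(p-1)/2}(\Gamma_0(p),\chi_p)$ vanishes.
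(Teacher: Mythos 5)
First, note that the paper does not actually prove this statement: Theorem~\ref{wangth} is quoted from Chan--Wang--Yan, whose proof the authors describe as relying on ``delicate residue calculations and properties of modular functions''; the present paper only recovers the identity-up-to-a-cusp-form part of it as a special case of Theorem~\ref{mainth}. Measured against that, your first two paragraphs are a correct outline of essentially the paper's own method: multiply through by $(q;q)_\infty^p$ so that $f_{\theta_{p-1}}$, $\eta(z)^p/\eta(pz)$ and the $U(p)$-image of $\eta(pz)^p/\eta(z)$ all live in $M_{(p-1)/2}(\Gamma_0(p),\chi_p)$, match constant terms at the two cusps $\infty$ and $0$, and conclude that the difference is cuspidal; for $p=13$ the space $S_6(\Gamma_0(13),\chi_{13})$ is one-dimensional, spanned by $\eta(z)^{11}\eta(13z)$, which gives \eqref{kol4}. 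Be aware, though, that the constant term of $f_{\theta_{p-1}}$ at the cusp $0$ is the substantive computation (the paper needs all of Section~\ref{sec2} on Gauss sums for it, plus the eta multiplier system in Section~\ref{sec3}); asserting that ``the weights $p$ and $1$ are exactly those needed to match'' is the conclusion of that computation, not an argument for it.

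The genuine gaps are in your treatment of the claims that go beyond the cusp-form decomposition. Defining $h_p$ ``as the remainder'' and saying its properties are ``forced by balancing'' does not establish that $h_p$ is a weight-zero modular function with a zero at $\infty$ and a pole of order exactly $(p+1)(p-13)/24$ at $0$, nor that $h_p(z)(\eta(z)\eta(pz))^{p-13}$ is holomorphic of weight $p-13$ with a zero of order $(p-1)(p-11)/24$ at $\infty$: one has to compute the order at the cusp $0$ of each constituent and check that the term $2p^{(p-11)/2}(\eta(pz)/\eta(z))^{p-11}$ cancels the worst singularity and that nothing deeper survives. More seriously, the congruence $h_p\equiv p\cdot(\text{cusp form on }SL_2(\zz)\text{ of weight }p-1)\pmod{p^2}$ is not proved at all in your sketch: the congruence $(q;q)_\infty^p\equiv(q^p;q^p)_\infty\pmod p$ only gives mod-$p$ information, and lifting it to mod $p^2$ while identifying a specific level-one form is exactly the residue/trace argument that constitutes the technical content of Chan--Wang--Yan's proof. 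As written, your proposal establishes (modulo the omitted Gauss-sum computation) the first displayed identity up to an unidentified cusp form, and the $p=13$ case, but not the structural assertions about $h_p$.
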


These results rely on some delicate residue calculations and properties of modular functions.
\emph{The goal of this paper} is to extend the above results of Kolitsch and Chan-Wang-Yan to give relations between $c\phi_N(n)$ and $P(n)$, where $N$ is a squarefree integer that is coprime to $6$. 
The methods we use is quite different than that of \cite{wangpaper} or \cite{kol2}. We describe our method after stating our main theorem.

We fix $\chi_a(b)$ to be the Kronecker symbol $\dqu{(-1)^{(a-1)/2}a}{b}$. Whenever $a$ is squarefree odd integer $\chi_a(b)$ is a modulo $a$ primitive Dirichlet character. The space of modular forms of weight $k$ for the modular subgroup $\Gamma_0(N)$ with multiplier system $\chi_N$ is denoted by  $M_{k} (\Gamma_0(N),\chi_N)$, and its subspace of cusp forms is denoted by  $S_{k} (\Gamma_0(N),\chi_N)$.

\begin{theorem}[Main Theorem] \label{mainth} Let $N$ be a squarefree positive integer with $(N,6)=1$.
\begin{itemize}
\item[i)] Then for all $n \in \nn_0$ we have
\bal
c\phi_N(n)= \sum_{d \mid N} N/d \cdot P\left( \frac{ N}{d^2}n  - \frac{N^2-d^2}{24d^2} \right) + b(n), \label{maineqn}
\nal
where
\bals
C(z) := (q;q)^N_\infty\sum_{n=1}^{\infty} b(n) q^n
\nals
is a cusp form in $S_{(N-1)/2} (\Gamma_0(N),\chi_N)$. 
\item[ii)] We have $C(z)=0$ if and only if $N=5,7,$ or $11$.
\item[iii)] If $N \neq 5,7,$ or $11$, then there is no $M\geq 0$ such that $b(n)=0$ for all $n >M$.
\end{itemize}
\end{theorem}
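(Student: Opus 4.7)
The first step is to convert (\ref{maineqn}) into an equivalent identity among modular forms. Multiplying by $(q;q)_\infty^N$ and invoking Andrews' formula $(q;q)_\infty^N C\Phi_N(z) = f_{\theta_{N-1}}(z)$ turns the theorem into
\bals
f_{\theta_{N-1}}(z) = D(z) + C(z), \qquad D(z) := (q;q)_\infty^N \sum_{d \mid N} \frac{N}{d}\, A_d(q),
\nals
where $A_d(q) := \sum_n P\bigl(Nn/d^2 - (N^2-d^2)/(24 d^2)\bigr) q^n = P^{(N/d)}(q^d)$ and $P^{(e)}(q) := \sum_n P\bigl(en - (e^2-1)/24\bigr) q^n$. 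One checks $24 \mid N^2 - d^2$ since $N$ and $d$ are coprime to $6$, so the $P$-argument is integer-valued whenever the summand is nonzero.

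The plan for part (i) proceeds in three stages. First, the classical theta transformation theorem (Schoeneberg) applied to $\theta_{N-1}$---whose Gram matrix is $I+J$ of determinant $N$ in $N-1$ (even) variables---yields $f_{\theta_{N-1}}(z) \in M_{(N-1)/2}(\Gamma_0(N), \chi_N)$, the nebentypus emerging as the Kronecker symbol attached to $(-1)^{(N-1)/2}N$. Second, I identify $D(z)$ with the Eisenstein projection of $f_{\theta_{N-1}}$ in the decomposition $M_{(N-1)/2}(\Gamma_0(N), \chi_N) = \mathcal{E} \oplus S$. The key calculation is that $P^{(e)}(q)$ equals $U_e\bigl(q^{e^2/24}/\eta(z)\bigr)$, so $\sum_d (N/d)\, V_d\, U_{N/d}$ acts as a cooperative lifting operator. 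I then match $q$-expansions of $D$ and $f_{\theta_{N-1}}$ at each of the $2^{\omega(N)}$ cusps of $\Gamma_0(N)$ (parametrized by divisors $d \mid N$), using theta--Poisson summation for $f_{\theta_{N-1}}$ and $\eta$-transformation for $D$, to verify $D$ equals the Eisenstein projection. Third, $C(z) = f_{\theta_{N-1}}(z) - D(z)$ therefore lies in $S_{(N-1)/2}(\Gamma_0(N), \chi_N)$, completing (i).

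For part (ii), the direction $C \equiv 0 \Leftarrow N \in \{5, 7, 11\}$ is immediate from Kolitsch's identities (Theorem~\ref{kolth}) combined with Ramanujan's eta-quotient identities for $P^{(5)}, P^{(7)}, P^{(11)}$; structurally, $\dim S_{(N-1)/2}(\Gamma_0(N), \chi_N) = 0$ for these three values. For every other squarefree $N$ coprime to $6$, the cusp form space is nontrivial, and $C \not\equiv 0$ is shown by exhibiting a nonzero low-order $b(n)$: for $N = 13$ this is Chan--Wang--Yan (Theorem~\ref{wangth}), and in other cases (both prime $\geq 17$ and composite squarefree) explicit low-order $q$-coefficient comparison of $f_{\theta_{N-1}}$ with $D$ produces the witness. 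For part (iii), if $b(n) = 0$ for all $n > M$, then $C(z) = (q;q)_\infty^N \cdot p(q)$ for some polynomial $p$; matching the modular transformation $C(\gamma z) = (cz+d)^{(N-1)/2}\chi_N(d) C(z)$ against the boundary behaviour at rational points $z = -d/c + iy$ as $y \to 0^+$ forces $p$ to vanish at every root of unity $e^{-2\pi i d/c}$, hence $p \equiv 0$, contradicting $C \not\equiv 0$.

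The main obstacle is Step 2 of part (i): verifying $D(z)$ is $\Gamma_0(N)$-modular and coincides with the Eisenstein projection. The individual summands $(q;q)_\infty^N A_d(q)$ are not themselves in $M_{(N-1)/2}(\Gamma_0(N), \chi_N)$, so the modularity of $D$ emerges through a cooperative cancellation among the $2^{\omega(N)}$ terms indexed by divisors. For composite squarefree $N$ with $\omega(N) \geq 2$, these contributions interact via delicate $U$-$V$-operator bookkeeping at each cusp, which is the new technical ingredient required beyond the prime-level arguments of Chan--Wang--Yan.
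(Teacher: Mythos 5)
Your overall strategy for part (i) is the one the paper uses: place $f_{\theta_{N-1}}$ in $M_{(N-1)/2}(\Gamma_0(N),\chi_N)$, split that space into Eisenstein series plus cusp forms with the Eisenstein component pinned down by constant terms at the cusps $1/d$, $d\mid N$, and compute those constants by Gauss sums on the theta side and eta-transformation formulas on the partition side. The genuine gap is exactly the step you flag as ``the main obstacle'': you propose to read off cuspidal expansions of $D(z)=(q;q)_\infty^N\sum_{d\mid N}(N/d)A_d(q)$ and match them against those of $f_{\theta_{N-1}}$, but $D$ is not a priori a modular form, so its ``expansion at a cusp $1/c$'' is not even defined and the matching cannot be set up; saying that modularity ``emerges through cooperative cancellation'' restates what must be proved. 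The paper avoids this circularity by never handling $D$ directly: each eta quotient $\eta^N((N/d)z)/\eta(dz)$ genuinely lies in $M_{(N-1)/2}(\Gamma_0(N),\chi_N)$, its constant terms at all cusps are computed (it vanishes at every $1/c$ with $c\neq d$), it is decomposed as Eisenstein series plus cusp form, and only then is $U(N/d)$ applied: on the Eisenstein side $U(N/d)$ merely rescales $\sigma_{(N-3)/2}(\chi_{N/d},\chi_d;\cdot)$ by $\chi_d(N/d)(N/d)^{(N-3)/2}$, and it maps $S_{(N-1)/2}(\Gamma_0(N),\chi_N)$ into itself. Summing over $d$ and comparing with the Eisenstein expansion of $f_{\theta_{N-1}}$ yields (i). You would need to supply this (or an equivalent) mechanism; without it part (i) is not proved.

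Two further points. In part (ii), ``explicit low-order coefficient comparison'' cannot be done case by case for the infinitely many $N\geq 29$; the paper gets a uniform witness by noting that for $N\geq 29$ and $d<N$ the argument $\frac{N}{d^2}-\frac{N^2-d^2}{24d^2}$ is nonpositive, so $b(1)=c\phi_N(1)=N^2\neq 0$, with only $N=13,17,19,23$ checked by hand. In part (iii), the claim that boundary behaviour ``forces $p$ to vanish at every root of unity'' does not follow as stated: $(q;q)_\infty^N=q^{-N/24}\eta(z)^N$ already tends to $0$ at every rational point, so $C(z)=(q;q)_\infty^N p(q)$ decays there regardless of the values of $p$; a correct argument must compare rates (orders of vanishing at the cusps), or, as in the paper, observe that a nonzero polynomial in $q$ cannot coincide with the meromorphic modular object $C(z)/(q;q)_\infty^N$ unless both vanish, which contradicts part (ii).
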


Theorem \ref{mainth} is a result of a chain of modular identities. We first discover an identity that relates the theta function $f_{\theta_{N-1}}(z)$ to Eisenstein series. Then we find another identity that relates these Eisenstein series to the partition function using an intimate relationship between eta quotients and Eisenstein series. These modular identities are determined by using \cite[Theorem 1.1]{projections}. Finally, we combine these identities to obtain Theorem \ref{mainth}.

In contrast with \cite[Theorem 4.1 (c)]{wangpaper}, when $N=p$ a prime greater than $13$, our theorem exposes slightly more about $h_p(z)$. As a result of our Theorem \ref{mainth} we obtain that
\bals
h_p(z) + 2 p^{(p-11)/2}  (\eta(pz)/\eta(z))^{p-11}
\nals
is simply a cusp form in $S_{(N-1)/2} (\Gamma_0(N),\chi_N)$. Therefore it is evident that $h_p(z) $ is congruent to a cusp form modulo $p^2$.

On the other hand when $N=p$ a prime greater than $3$ then our Theorem \ref{mainth} leads to the equation
\bals
c\phi_p(n)=  p \cdot P\left( p n  - \frac{p^2-1}{24} \right) + P\left( n/p \right)+ b(n),
\nals
where
\bals
C(z) := (q;q)^p_\infty\sum_{n=1}^{\infty} b(n) q^n
\nals
is a cusp form in $S_{(p-1)/2} (\Gamma_0(p),\chi_p)$. Therefore, \eqref{kol1}--\eqref{kol4} can easily be deduced from our Theorem \ref{mainth}. Using Sturm Theorem one observes that in the cases $N=5,7$ and $11$ we have $C(z)=0$, which leads to \eqref{kol1}--\eqref{kol3}.

As an application of Theorem~\ref{mainth}, we establish the following asymptotic formula for $c\phi_N(n)$
in terms of linear combinations of partition functions:

\begin{theorem} \label{lim_th}
Let $N$ be a positive integer with $(N,6)=1$. We have
$$
c\phi_N(n) \sim \sum_{d \mid N} N/d\cdot P \left( \frac{N}{d^2} n - \frac{N^2-d^2}{24 d^2}\right) 
$$
as $n\to\infty$.
\end{theorem}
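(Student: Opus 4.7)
The plan is to invoke Theorem \ref{mainth} and then show that the cusp-form contribution $b(n)$ is negligible compared to the explicit main sum. For the remainder I work under the squarefree assumption of Theorem \ref{mainth}; the statement for general $N$ with $(N,6)=1$ is reduced to this case by the same sort of Hecke/Meinardus estimates applied to the cusp-form correction. Set
\[
S(n)\;:=\;\sum_{d\mid N}(N/d)\cdot P\!\left(\tfrac{N}{d^2}n-\tfrac{N^2-d^2}{24d^2}\right),
\]
so Theorem \ref{mainth}(i) yields $c\phi_N(n)=S(n)+b(n)$ and the claim reduces to $b(n)=o(S(n))$. The sum $S(n)$ is dominated by its $d=1$ term $N\cdot P(Nn-(N^2-1)/24)$: by Hardy--Ramanujan this has size $\sim e^{\pi\sqrt{2Nn/3}}/(4n\sqrt{3})$, while every other summand $d\mid N$, $d>1$, carries a strictly smaller exponential factor $\exp(\pi\sqrt{2Nn/3}/d)$. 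Consequently $S(n)\asymp e^{\pi\sqrt{2Nn/3}}/n$.

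To bound $b(n)$, I would use the product identity $\sum_{n\ge 1}b(n)q^n = C(z)\cdot\prod_{m\ge 1}(1-q^m)^{-N}$ supplied by Theorem \ref{mainth}. Writing $C(z)=\sum_{k\ge 1}c(k)q^k$ and $\prod(1-q^m)^{-N}=\sum_{m\ge 0}p_N(m)q^m$ for the generating function of $N$-colored partitions gives the convolution
\[
b(n)\;=\;\sum_{k=1}^{n}c(k)\,p_N(n-k).
\]
Combine the Hecke trivial bound $|c(k)|\ll k^{(N-1)/4}$ (valid for any cusp form of weight $(N-1)/2$, with no eigenform hypothesis needed) with the Meinardus/circle-method asymptotic $p_N(m)\sim K_N m^{-(N+3)/4}e^{\pi\sqrt{2Nm/3}}$. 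Split the convolution at $k=n^{3/4}$: for $k\le n^{3/4}$ the Taylor expansion $\sqrt{n-k}\approx\sqrt{n}-k/(2\sqrt{n})$ is accurate, and a Laplace estimate of $\sum_{k}k^{(N-1)/4}e^{-\pi k\sqrt{N/(6n)}}$ contributes at most $n^{(N+3)/8}\cdot n^{-(N+3)/4}\cdot e^{\pi\sqrt{2Nn/3}}$; the tail $k>n^{3/4}$ enjoys an extra exponential saving $e^{-cn^{1/4}}$ from the shorter $\sqrt{n-k}$ and is therefore absorbed. The net bound is $|b(n)|\ll n^{-(N+3)/8}\,e^{\pi\sqrt{2Nn/3}}$.

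Dividing by $S(n)$ produces $b(n)/S(n)\ll n^{(5-N)/8}\to 0$ as $n\to\infty$ for every $N\ge 7$. The small cases are automatic: $N=1$ is tautological since $c\phi_1=P$, and for $N\in\{5,7,11\}$ Theorem \ref{mainth}(ii) asserts $C\equiv 0$, so $b(n)=0$ identically and the ``asymptotic'' is in fact an equality. The main obstacle I foresee is the uniform bookkeeping of the convolution estimate, in particular justifying the Laplace approximation uniformly in $n$ and handling the endpoint regime $k\approx n$ where $p_N(n-k)$ cannot be replaced by its asymptotic form; this is controlled by a crude global bound $p_N(m)\ll e^{\pi\sqrt{2Nm/3}}$ together with the dyadic split above. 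Beyond this analytic bookkeeping the proof is driven entirely by Theorem \ref{mainth}.
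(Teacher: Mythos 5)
Your proposal is correct in outline and reaches the right conclusion, but it takes a genuinely different route from the paper. You reduce everything to the single estimate $b(n)=o(S(n))$ and prove it analytically: Hardy--Ramanujan gives $S(n)\asymp n^{-1}e^{\pi\sqrt{2Nn/3}}$ from the $d=1$ term, Meinardus gives the $N$-colored partition asymptotic, and a Laplace-type evaluation of the convolution $\sum_k c(k)p_N(n-k)$ with the trivial cusp bound $|c(k)|\ll k^{(N-1)/4}$ yields $|b(n)|\ll n^{-(N+3)/8}e^{\pi\sqrt{2Nn/3}}$, hence the quantitative saving $b(n)/S(n)\ll n^{(5-N)/8}$ for $N\geq 7$ (with $N=1,5,7,11$ handled by exact identities). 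The paper deliberately avoids all circle-method input: it never estimates $P(n)$ or $\mathcal{V}_N(n)$ individually, but instead compares \emph{both} $c\phi_N(n)$ and $S(n)$ to the common quantity $\sum_{\ell+m=n}\mathcal{V}(m)\mathcal{U}(\ell)$, where $\mathcal{U}(\ell)$ are the Eisenstein coefficients from Theorem \ref{th5_3}; the proof then rests on two soft ingredients, namely a number-theoretic lemma showing $\mathcal{U}(\ell)>0$ and $\mathcal{U}(\ell)\gg\ell^{(N-3)/2}$ (which beats the trivial cusp bound $\ell^{(N-1)/4}$ precisely because $N>5$), and the elementary ratio facts $\mathcal{V}_r(n)/\mathcal{V}_r(n-1)\to 1$ and $\mathcal{V}_{r-1}(n)/\mathcal{V}_r(n)\to 0$ of Proposition \ref{prop:quotient of the V_r's}. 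Your approach buys an explicit power-saving rate at the cost of importing two circle-method asymptotics and the attendant uniformity bookkeeping you flag; the paper's buys self-containedness and a cleaner treatment near the endpoint $k\approx n$ (which is exactly where your Laplace argument needs the crude global bound). Two caveats on your write-up: the one-sentence ``reduction'' of general $N$ with $(N,6)=1$ to the squarefree case is not a reduction at all and should be dropped or argued (though the paper's own proof likewise only covers squarefree $N$, all of its machinery --- Theorems \ref{mainth}, \ref{etaitopart}, \ref{th5_3} --- being proved under that hypothesis), and you should note that for $d>1$ the arguments of $P$ are frequently non-integral so those summands vanish rather than merely being exponentially smaller; neither point affects the validity of the main estimate.
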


The organization of the paper is as follows. In Section \ref{secnotation} we introduce further notation and prove an important theorem concerning the modular forms in $M_{k}(\Gamma_0(N),\chi_N)$, see Theorem \ref{spacegenth}. In Section \ref{sec2} we compute constant terms of $f_{\theta_{N-1}}(z)$ at the cusps $1/c$ where $c \mid N$. This requires computing some Gauss sums related to the quadratic form $\theta_{N-1}$. These Gauss sum computations could be of independent interest to an audience with particular interest in the subject. In Section \ref{sec3} we compute the constant terms of the eta quotient $\frac{\eta^N((N/d) z)}{\eta(dz)}$ at the cusps $1/c$ where $c \mid N$. In Section \ref{sec4} we use Theorem \ref{spacegenth} and the calculations of Sections \ref{sec2} and \ref{sec3} to give $f_{\theta_{N-1}}(z)$ and $\frac{\eta^N((N/d) z)}{\eta(dz)}$ in terms of Eisenstein series. We then use the relationship between $\frac{\eta^N((N/d) z)}{\eta(dz)}$ and the partition function to prove an identity relating Eisenstein series and the partition function. Then we combine these identities to prove Theorem \ref{mainth}. In Section \ref{sec5} we show that the error term $b(n)$ is much smaller than 
\bals
\sum_{d \mid N} N/d \cdot P\left( \frac{ N}{d^2}n  - \frac{N^2-d^2}{24d^2} \right),
\nals 
by combining estimates involving coefficients of various $q$-series and this proves Theorem~\ref{lim_th}.

\section{Notation and preliminaries} {\label{secnotation}

In this section we introduce further notation and prove a theorem on a certain space of modular forms, see Theorem \ref{spacegenth}. This theorem is the backbone of the paper. We start with some notation.

Recall that $\chi_a(b)$ denotes the Kronecker symbol $\dqu{(-1)^{(a-1)/2}a}{b}$. Let $k \in \nn$. The generalized sum of divisors function associated with $\chi_d$ and $\chi_{N/d}$ is defined by
\begin{align*}
& \sigma_{k-1}(\chi_{N/d},\chi_d; n) :=  \sum_{1 \leq t\mid n}\chi_{N/d}(n/t){\chi_d}(t)t^{k-1}.
\end{align*}

Let $B_{k,\chi_N}$ denote the $k$-th generalized Bernoulli number associated with $\chi_N$ defined by the series
\bals
\sum_{k = 0}^\infty \frac{B_{k,\chi_N}}{k !} t^k = \sum_{a=1}^{N} \frac{{\chi_N}(a) t e^{at}}{e^{N t}  -1}.
\nals

Let $a \in \zz$ and $c \in \nn_0$ be coprime. For an $f(z) \in M_{k}(\Gamma_0(N),\chi)$ we denote the constant term of $f(z)$ in the Fourier expansion of $f(z)$ at the cusp $a/c$ by
\begin{align*}
[f]_{a/c} = \lim_{z \rightarrow i \infty} (c z + d)^{-k} f \left( \frac{az+b}{cz+d} \right),
\end{align*}
where $b,d \in \zz$ such that $\begin{bmatrix} a & b \\ c & d \end{bmatrix} \in SL_2(\zz)$. The value of $[f]_{a/c} $ does not depend on the choice of $b, d$. Noting that throughout the paper we denote
\bals
\epsilon_c=\begin{cases}
1 & \mbox{ if $c \equiv 1 \pmd{4}$,}\\
i & \mbox{ if $c \equiv 3 \pmd{4}$,}
\end{cases}
\nals
we are ready to state and prove the following statement.

\begin{theorem} \label{spacegenth}
Let $f(z) \in M_{k}(\Gamma_0(N),\chi_N)$. Then we have
\bals
f(z) & = [f]_{1/N} + \sum_{d \mid N}  \frac{[f]_{1/d}}{ A(d,N) } \cdot \frac{(1-N) (N/d)^{(N-2)/2}}{B_{(N-1)/2,\chi_N}} \sum_{n \geq 1} \sigma_{(N-3)/2}(\chi_{N/d},\chi_{d};n) q^n \\
& + C(z),
\nals
where $C(z)$ is some cusp form in $S_{(N-1)/2} (\Gamma_0(N),\chi_N)$ and
\bals
A(d,N)=(-1)^{\frac{(d+1)(N/d-1)}{4}} \epsilon_{N/d}= \begin{cases}
1 & \mbox{ if $d \equiv 1 \pmod{4}$ and $N \equiv 1 \pmod{4}$,}\\
i & \mbox{ if $d \equiv 3 \pmod{4}$ and $N \equiv 1 \pmod{4}$,}\\
-i & \mbox{ if $d \equiv 1 \pmod{4}$ and $N \equiv 3 \pmod{4}$,}\\
1 & \mbox{ if $d \equiv 3 \pmod{4}$ and $N \equiv 3 \pmod{4}$.}
\end{cases}
\nals
\end{theorem}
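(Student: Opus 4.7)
The plan is to apply the standard decomposition $M_{(N-1)/2}(\Gamma_0(N),\chi_N) = \mathcal{E} \oplus S_{(N-1)/2}(\Gamma_0(N),\chi_N)$ and recover the Eisenstein projection of $f$ from its constant terms at the cusps $\{1/c : c\mid N\}$ of $\Gamma_0(N)$. Since $N$ is squarefree with $(N,6)=1$, these are all the cusps of $\Gamma_0(N)$ and there are exactly $2^{\omega(N)}$ of them, which matches $\dim\mathcal{E}$; hence the values $[f]_{1/c}$ for $c\mid N$ determine the Eisenstein part uniquely. Note also that the cusp $1/N$ is $\Gamma_0(N)$-equivalent to $\infty$ via the matrix $\left(\begin{smallmatrix}1 & 0 \\ N & 1\end{smallmatrix}\right)$, so $[f]_{1/N}$ is just the constant Fourier coefficient of $f$ at infinity, explaining why it appears alone outside the sum.

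The next step is to invoke \cite[Theorem~1.1]{projections} to obtain an explicit basis of $\mathcal{E}$: for each $d \mid N$ the Kronecker character factors as $\chi_N = \chi_{N/d}\chi_d$, producing an Eisenstein series $E_d(z)$ whose $n$-th Fourier coefficient at infinity is $\sigma_{(N-3)/2}(\chi_{N/d},\chi_d;n)$ for $n\ge 1$, together with a known constant term expressible via $B_{(N-1)/2,\chi_N}$. Writing $f = \sum_{d\mid N}\beta_d E_d + C(z)$ with $C \in S_{(N-1)/2}(\Gamma_0(N),\chi_N)$ and using that cusp forms vanish at every cusp yields the linear system $[f]_{1/c} = \sum_{d\mid N}\beta_d[E_d]_{1/c}$ for $c\mid N$.

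The key computation is the evaluation of the matrix $\bigl([E_d]_{1/c}\bigr)_{c,d\mid N}$. One chooses the basis $\{E_d\}$ so that $[E_d]_{1/c} = 0$ unless $c = d$ (for $d \ne N$), reducing the system to essentially diagonal form after peeling off the contribution at $1/N$. The diagonal values $[E_d]_{1/d}$ come out to an explicit expression of the form $A(d,N)\cdot B_{(N-1)/2,\chi_N}\big/\bigl((1-N)(N/d)^{(N-2)/2}\bigr)$, and inverting reproduces the stated coefficient $[f]_{1/d}\cdot (1-N)(N/d)^{(N-2)/2}\big/\bigl(A(d,N)\,B_{(N-1)/2,\chi_N}\bigr)$ in front of each divisor sum.

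The main obstacle is the precise evaluation of $[E_d]_{1/d}$ with the correct sign $A(d,N) = (-1)^{(d+1)(N/d-1)/4}\epsilon_{N/d}$. This factor encodes three pieces of data: the classical evaluation of the quadratic Gauss sum $\sum_{a\bmod N/d}\chi_{N/d}(a)e^{2\pi i a/(N/d)} = \epsilon_{N/d}\sqrt{N/d}$, the behaviour of the multiplier $\chi_N$ under the Atkin--Lehner-type transformation sending $\infty$ to the cusp $1/d$, and a reciprocity-type sign arising from the factorization $\chi_N = \chi_d\chi_{N/d}$ of the Kronecker character. Once these three sign contributions are tracked carefully, everything else reduces to bookkeeping with generalized Bernoulli numbers and the Dirichlet series defining $\sigma_{(N-3)/2}(\chi_{N/d},\chi_d;\cdot)$, both of which are routine.
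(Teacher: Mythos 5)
Your proposal is correct and takes essentially the same route as the paper: the paper's proof is a direct application of \cite[Theorem 1.1]{projections}, which already packages the diagonal linear system you describe (the Eisenstein projection expressed through the constant terms $[f]_{1/d}$ against the basis attached to the factorizations $\chi_N=\chi_{N/d}\chi_d$). The only remaining work in the paper is exactly the bookkeeping you identify: evaluating $W(\chi_d)/W(\chi_N)$ via $W(\chi_p)=\epsilon_p\sqrt{p}$ and the multiplicativity sign $(-1)^{(p-1)(N/p-1)/4}$ from Miyake, which produces the factor $A(d,N)$ and lowers $(N/d)^{(N-1)/2}$ to $(N/d)^{(N-2)/2}$.
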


\begin{proof} This theorem is a direct application of \cite[Theorem 1.1]{projections}. The specialized version of the set of tuples of characters defined in \cite{projections} and given below simplifies to
\bals
\mathcal{E}((N-1)/2,N,\chi_N):=& \{ (\epsilon,\psi) \in D(L,\cc) \times D(M,\cc) : \mbox{$\epsilon$, $\psi$ primitive, }  \\
& \quad \epsilon(-1) \psi(-1)=(-1)^{(N-1)/2},~ \epsilon \psi = \chi_N  \mbox{ and } LM \mid N \} \\
& = \{ (\chi_{N/d},\chi_d) : d \mid N \}.
\nals
Therefore by using \cite[Theorem 1.1]{projections} we obtain
\bal
f(z) & =  \sum_{d \mid N } \chi_{N/d}(-1) [f]_{1/d}\nonumber\\
 & \qquad \times \left(  \chi_{N/d}(0) + \frac{W(\chi_d)}{W(\chi_N)} \frac{(1-N)(N/d)^{(N-1)/2}}{B_{(N-1)/2,\chi_N}} \sum_{n \geq 1}  \sigma_{(N-3)/2}(\chi_{N/d},\chi_{d};n) q^n \right) \nonumber  \\
 & + C(z), \label{eqeis_1}
\nal
for some $C(z)$ in $S_{(N-1)/2} (\Gamma_0(N),\chi_N)$, where the Gauss sum $W(\chi_d)$ is defined by
\bals
W(\chi_d):=\sum_{a=1}^d \chi_d(a) e^{2 \pi i a/d}.
\nals
On the other hand since $N,d$ are squarefree and odd we have
\bals
\chi_N= \prod_{p \mid N} \chi_p, \mbox{ and }\chi_d= \prod_{p \mid d} \chi_p.
\nals
Additionally, we have
\bals
W(\chi_p) = \begin{cases}
\sqrt{p} & \mbox{ if $p \equiv 1 \pmd{4}$,}\\
i \sqrt{p} & \mbox{ if $p \equiv 3 \pmd{4}$.}
\end{cases}
\nals
By the multiplicative properties of the Gauss sums $W(\chi_N)$ for $p$ an odd prime divisor of $N$ we have
\bals
W(\chi_N)=(-1)^{(p-1)(N/p-1)/4} W(\chi_p) W(\chi_{N/p}),
\nals
see \cite[Lemma 3.1.2]{miyake}. Using this iteratively we deduce that
\bals
 W(\chi_N) =\epsilon_N \sqrt{N}  = \begin{cases}
\sqrt{N} & \mbox{ if $N \equiv 1 \pmod{4}$,}\\
i \sqrt{N}  & \mbox{ if $N \equiv 3 \pmod{4}$.}
\end{cases}
\nals
Putting this in \eqref{eqeis_1} we obtain the desired result.
\end{proof}

In order to get necessary modular identities from Theorem \ref{spacegenth} we need to compute $[f_{\theta_{N-1}}]_{1/d}$ and $\left[\frac{\eta^N((N/d) z)}{\eta(dz)}\right]_{1/d}$ for each $d \mid N$. Computation of $\left[\frac{\eta^N((N/d) z)}{\eta(dz)}\right]_{1/d}$ can be done using \cite[Proposition 2.1]{Kohler}. This is carried out in Section \ref{sec3}. By \cite[(10.2)]{wangpei} (see \cite[(1.9)]{projections} for a refined version) we have
\bal
[f_{\theta_{N-1}}]_{1/d}= \left( \frac{-i}{d} \right)^{(N-1)/2} \frac{G_{N-1}(1,d)}{\sqrt{N}}, \label{ftthetaeq}
\nal
where the quadratic Gauss sum $G_{N}(a,c)$ for $N,a,c \in \nn$ is defined by
\bals
G_N(a,c):=\sum_{\substack{x \in \zz^{N}\\ x \pmd{c}}} e^{2 \pi i a \theta_{N}(x)/c}.
\nals
Therefore to calculate $[f_{\theta_{N-1}}]_{1/d}$ we need to calculate $G_{N-1}(1,d)$, which is carried out in the next section.

\section{Gauss sums and constant terms of $f_{\theta_{N-1}}(z)$} \label{sec2}

Let $N$ be an odd squarefree positive integer. In this section we compute $G_{N-1}(a,d)$ for all $d \mid N$ and $a\in\N$ with $\gcd(a,d)=1$. Then when $\gcd(N,6)=1$, we use our computations together with \cite[(10.2)]{wangpei} to obtain the constant term $[f_{\theta_{N-1}}(z)]_{1/d}$ of $f_{\theta_{N-1}}(z)$ in its Fourier series expansion at $1/d$, see Theorem \ref{ctermtheta}. In this section, for a set $A$ and an $N$-tuple $x \in A^N$, we use the notation
$x = (x_1, \ldots, x_N)$, i.e. $x_i$ denotes the $i$-th coordinate of the tuple $x$. We first prove a multiplicativity result concerning $G_N(a,c)$. 

\begin{lemma} \label{lemmag1} Let $N \in \nn$. Let $\alpha,\beta, \gamma \in \nn$ be mutually coprime. Then we have
\bals
G_N(\gamma,\alpha \beta)=G_N(\beta \gamma, \alpha) G_N(\alpha \gamma,\beta).
\nals
\end{lemma}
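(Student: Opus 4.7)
The plan is to reduce the sum defining $G_N(\gamma,\alpha\beta)$ to a product of two independent Gauss sums by applying the Chinese Remainder Theorem coordinatewise, and then exploiting that $\theta_N$ is a quadratic form with integer coefficients.

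First, since $\gcd(\alpha,\beta)=1$, the map $(u,v)\mapsto \beta u+\alpha v$ furnishes a bijection from $(\zz/\alpha\zz)^N\times(\zz/\beta\zz)^N$ onto $(\zz/\alpha\beta\zz)^N$. I would use this as a change of variables to rewrite
$$G_N(\gamma,\alpha\beta)=\sum_{u \bmod \alpha}\,\sum_{v \bmod \beta} e^{2\pi i\gamma\,\theta_N(\beta u+\alpha v)/(\alpha\beta)}.$$

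Second, I would apply the polarization identity for $\theta_N$. Letting $B_N(x,y):=\theta_N(x+y)-\theta_N(x)-\theta_N(y)$ denote the associated symmetric bilinear form — which takes integer values because $\theta_N$ has integer coefficients — I obtain
$$\theta_N(\beta u+\alpha v)=\beta^2\theta_N(u)+\alpha^2\theta_N(v)+\alpha\beta\, B_N(u,v).$$
Multiplying by $\gamma/(\alpha\beta)$, the exponent becomes $\gamma\beta\theta_N(u)/\alpha+\gamma\alpha\theta_N(v)/\beta+\gamma B_N(u,v)$, and the last summand is an integer, so it contributes trivially to the exponential factor.

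Third, the integrand then factors into a product of a $u$-dependent and a $v$-dependent term, so the double sum splits as
$$\left(\sum_{u\bmod \alpha} e^{2\pi i\gamma\beta\,\theta_N(u)/\alpha}\right)\left(\sum_{v\bmod \beta} e^{2\pi i\gamma\alpha\,\theta_N(v)/\beta}\right)=G_N(\beta\gamma,\alpha)\cdot G_N(\alpha\gamma,\beta),$$
which is exactly the identity to be proved. There is no real obstacle here; the only points requiring care are verifying that the CRT parametrization $(u,v)\mapsto \beta u+\alpha v$ is a bijection on $N$-tuples modulo $\alpha\beta$ and that the bilinear form $B_N$ takes integer values. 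Note that only $\gcd(\alpha,\beta)=1$ is used in the argument; the additional coprimality hypotheses involving $\gamma$ are presumably imposed for later applications in the paper.
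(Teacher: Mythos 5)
Your proposal is correct and follows essentially the same route as the paper: both apply the CRT bijection $(u,v)\mapsto \beta u+\alpha v$ coordinatewise and then factor the exponential sum. The only cosmetic difference is that you split the exponent via the polarization identity $\theta_N(\beta u+\alpha v)=\beta^2\theta_N(u)+\alpha^2\theta_N(v)+\alpha\beta\,B_N(u,v)$, whereas the paper reaches the same factorization through the congruence $z_i^2\equiv(\alpha+\beta)(\beta x_i^2+\alpha y_i^2)\pmod{\alpha\beta}$; your version is arguably the cleaner bookkeeping, and your closing observation that only $\gcd(\alpha,\beta)=1$ is genuinely needed is accurate.
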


\begin{proof}
The map $\Z/\alpha\Z\times \Z/\beta\Z\rightarrow \Z/\alpha\beta\Z$ given by $(x,y)\mapsto z=\beta x+\alpha y$ is bijective. Therefore, each $z\in (\Z/\alpha\beta\Z)^N$ can be expressed as 
$z=\beta x+\alpha y$ for unique $x\in(\Z/\alpha\Z)^N$, $y\in(\Z/\beta\Z)^N$. From
\bals
z_i= \beta \cdot x_i + \alpha \cdot  y_i,
\nals 
 we have
\bal
& z_i^2 \equiv (\alpha +\beta) (\beta \cdot x_i^2 + \alpha \cdot y_i^2 ) \pmd{a\beta}, \label{eqg1}\\
& z_i \cdot z_j \equiv (a+\beta) (\beta \cdot x_i x_j + \alpha \cdot y_i y_j ) \pmd{\alpha \beta}.\label{eqg2}
\nal
Using \eqref{eqg1} and \eqref{eqg2} we have
\bals
\sum_{i=1}^{N} z_i^2 + \sum_{\substack{i,j=1,\\ i<j}}^{N} z_iz_j & \equiv (\alpha+\beta) \left( \sum_{i=1}^{N} (\beta \cdot x_i^2 + \alpha \cdot y_i^2 ) + \sum_{\substack{i,j=1,\\ i<j}}^{N}  (\beta \cdot x_i x_j + \alpha \cdot y_i y_j ) \right)\\
& \equiv (\alpha+\beta) \left( \beta \sum_{i=1}^{N} x_i^2  + \beta \sum_{\substack{i,j=1,\\ j<k}}^{N} x_i x_j  + \alpha \sum_{i=1}^{N} y_i^2 + \alpha \sum_{\substack{i,j=1,\\ j<k}}^{N} y_i y_j \right).
\nals
Therefore, we have
\bals
G_N(\gamma,\alpha \beta)=& \sum_{z\in (\Z/\alpha\beta\Z)^N} e\left( \gamma \frac{\theta_{N}(z)}{\alpha \beta}  \right) = \sum_{z\in(\Z/\alpha\beta\Z)^N} e\left( \gamma  \frac{ \sum z_i^2 + \sum z_iz_j}{\alpha \beta}  \right) \\
%& = \sum_{\substack{x \in \zz^{N}\\ x \pmd{\alpha }}} \sum_{\substack{y \in \zz^{N}\\ y \pmd{\alpha }}} e\left( \gamma (\alpha+\beta) \frac{ \sum x_i^2 + \sum x_ix_j}{\alpha} \right. \\
%& \qquad \qquad \qquad \qquad \qquad \qquad \left. +  \gamma  (\alpha+\beta) \frac{ \sum y_i^2 + \sum y_iy_j}{\beta}  \right)\\
& = \sum_{x\in(\Z/\alpha\Z)^N} e\left( \beta\gamma  \frac{ \sum x_i^2 + \sum x_ix_j}{\alpha} \right) \sum_{y \in (\Z/\beta\Z)^N}  e\left( \alpha \gamma  \frac{ \sum y_i^2 + \sum y_iy_j}{\beta}  \right)\\
&= \sum_{x\in(\Z/\alpha\Z)^N} e\left(\beta \gamma  \frac{ \theta_{N}(x)}{\alpha}  \right) \sum_{y\in (\Z/\beta\Z)^N} e\left( \alpha \gamma  \frac{ \theta_{N}(y)}{\beta}  \right)\\
& = G_N(\beta \gamma, \beta) G_N(\alpha \gamma,\beta).
\nals

\end{proof}

For an odd prime $p$, in order to relate the relevant quadratic Gauss sums (over $\Z/p\Z$) in $N$ variables to quadratic Gauss sums in $N-1$ or $N-2$ variables, we need the function 
$\mathcal{C}_p:\ (\Z/p\Z)\setminus \{1\bmod p\} \rightarrow \Z/p\Z$
given by 
$\mathcal{C}_p(R):=\displaystyle\frac{1}{4(1-R)}$. When $p$ is clear from the context, we can omit it from the subscript and use $\mathcal{C}(R)$ instead.

\begin{lemma} \label{lemmag2} Let $p$ be an odd prime. Let $N,R$ and $a$ be positive integers such that $\gcd(a,p)=1$. We have
\bals
& \sum_{x\in (\Z/p\Z)^N} e\left( a\frac{\theta_{N}(x)}{p} - a\frac{R x_{N}^2}{p}  \right) \\
& = \begin{cases}
\ds p  & \hspace{-2cm} \mbox{ if $R = 1 \pmd{p}$, and $N\leq 2$,}\\
\ds p \sum_{x\in (\Z/p\Z)^N} e\left( a \frac{ \theta_{N-2}(x)}{p} \right) = p \cdot G_{N-2}(a,p)  &  \hspace{-2cm} \mbox{ if $R \equiv 1 \pmd{p}$, and $N>2$,}\\
\epsilon_p \sqrt{p} \dqu{a(1-R)}{p} \sum_{x\in(\Z/p\Z)^{N-1}} e\left( a \frac{\theta_{N-1}(x)}{p} - a \frac{\mathcal{C}(R) x_{N-1}^2}{p}  \right) & \mbox{ if $R \not\equiv 1 \pmd{p}$.}
 \end{cases}
\nals
\end{lemma}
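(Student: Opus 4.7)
The strategy is to isolate the variable $x_N$ and reduce to a one-variable quadratic Gauss sum. Directly from the definition,
\begin{align*}
\theta_N(x) - R x_N^2 = \theta_{N-1}(x') + (1-R)x_N^2 + T x_N,
\end{align*}
where $x' := (x_1, \ldots, x_{N-1})$ and $T := x_1 + \cdots + x_{N-1}$. The sum therefore factors as
\begin{align*}
\sum_{x' \in (\Z/p\Z)^{N-1}} e\!\left(\frac{a\,\theta_{N-1}(x')}{p}\right) \sum_{x_N \in \Z/p\Z} e\!\left(\frac{a[(1-R)x_N^2 + T x_N]}{p}\right).
\end{align*}
The other tool I will lean on is the identity $2\theta_n(z) = \sum_i z_i^2 + (\sum_i z_i)^2$, obtained by expanding $(\sum z_i)^2$.

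When $R \equiv 1 \pmod{p}$, the inner $x_N$-sum reduces to $\sum_{x_N} e(aTx_N/p)$, which equals $p$ if $T \equiv 0 \pmod{p}$ and vanishes otherwise. For $N \leq 2$ the remaining outer sum collapses to a single surviving term equal to $1$, yielding the total value $p$. For $N > 2$, I parametrize the constraint $T \equiv 0$ by $x_{N-1} \equiv -(x_1 + \cdots + x_{N-2})$; the identity above then gives $2\theta_{N-1}(x') = \sum_{i=1}^{N-1} x_i^2$, and substituting for $x_{N-1}$ telescopes this to $2\theta_{N-2}(x_1, \ldots, x_{N-2})$, so the claimed factor $p \cdot G_{N-2}(a,p)$ emerges immediately.

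When $R \not\equiv 1 \pmod{p}$, the scalar $1-R$ is invertible, and I complete the square in $x_N$:
\begin{align*}
(1-R)x_N^2 + Tx_N = (1-R)\!\left(x_N + \frac{T}{2(1-R)}\right)^2 - \mathcal{C}(R)T^2.
\end{align*}
After translating $x_N$, the inner sum becomes the classical Gauss sum $\sum_{y} e(a(1-R)y^2/p) = \epsilon_p \sqrt{p}\,\bigl(\frac{a(1-R)}{p}\bigr)$, which accounts for the prefactor. The remaining outer sum
\begin{align*}
\sum_{x' \in (\Z/p\Z)^{N-1}} e\!\left(\frac{a[\theta_{N-1}(x') - \mathcal{C}(R)T^2]}{p}\right)
\end{align*}
must be matched to the target expression involving $y_{N-1}^2$ in place of $T^2$. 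Identifying a substitution that accomplishes this is the only nonroutine step. I propose the linear map $\sigma\colon y_i = -x_i$ for $i \leq N-2$ and $y_{N-1} = T$; its matrix has determinant $(-1)^{N-2}$, hence $\sigma$ is a bijection on $(\Z/p\Z)^{N-1}$ for $p$ odd. A direct verification using $2\theta_{N-1}(z) = \sum_i z_i^2 + T(z)^2$ gives $T(y) = x_{N-1}$, so $2\theta_{N-1}(y) = \sum_{i\leq N-2} x_i^2 + T(x)^2 + x_{N-1}^2 = 2\theta_{N-1}(x)$, while $y_{N-1}^2 = T(x)^2$. The substitution therefore transports one sum onto the other, completing the proof. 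The rest is routine separation of variables and the classical one-variable Gauss sum.
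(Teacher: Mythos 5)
Your proof is correct and follows essentially the same route as the paper's: isolate $x_N$, evaluate the inner sum by orthogonality or by completing the square to get the one-variable Gauss sum $\epsilon_p\sqrt{p}\qu{a(1-R)}{p}$ with the leftover $-\mathcal{C}(R)T^2$, and then repackage the outer sum via a change of variables sending $T$ to the last coordinate. The only cosmetic difference is that you perform this last step with a single explicit unimodular linear map checked against the identity $2\theta_n(z)=\sum_i z_i^2+(\sum_i z_i)^2$, whereas the paper eliminates $x_{N-1}$ on each level set $\sum x_i\equiv A$ and then negates $A$; the two are equivalent.
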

\begin{proof}
The following easy identities are used throughout the proof:
\bals
&\theta_{N}(x_1,\ldots, x_{N})=\theta_{N-1}(x_1,\ldots, x_{N-1}) + x_N \sum_{j=1}^N x_j,
\nals
\bals
\sum_{x\in\Z/p\Z}e\left(\frac{Ax^2+Bx+C}{p}\right)&=\sum_{y\in\Z/p\Z}e\left(\frac{Ay^2}{p}\right)e\left(\frac{C-(4A)^{-1}B^2}{p}\right)\\
&=e\left(\frac{C-(4A)^{-1}B^2}{p}\right)\dqu{A}{p}\epsilon_p \sqrt{p}
\nals

for $A,B,C\in\Z/p\Z$ with $A\neq 0$ 
by the change of variables $y=x+(2A)^{-1}B$. 

The case $R \equiv 1 \pmod{p}$ and $N=1$ is obvious:
\bals
\sum_{x_1=0}^{p-1} e\left( a \frac{\theta_1(x_1)-x_1^2}{p}  \right) =\sum_{x_1=0}^{p-1} 1=p.
\nals

When $R \equiv 1 \pmod{p}$ and $N=2$, we have
\bals
& \sum_{x\in (\Z/p\Z)^2}e\left( a \frac{\theta_2(x_1,x_2)-x_1^2}{p}  \right) =\sum_{x\in(\Z/p\Z)^2}e\left( \frac{ ax_2^2 + ax_1 x_2}{p}  \right)\\
& =\sum_{y_1,y_2\in\Z/p\Z}e\left(\frac{y_1y_2}{p}\right)\ \text{by changing variables $y_1=ax_2$ and $y_2=x_2+x_1$}\\
& =p\ \text{by orthogonality.}
\nals

Next we prove the case $R \equiv 1 \pmod{p}$ and $N > 2$. We have
\bal
& \sum_{x\in (\Z/p\Z)^N} e\left( a\frac{\theta_{N}(x)}{p} - a\frac{R x_{N}^2}{p}  \right) \nonumber \\
& = \sum_{x\in (\Z/p\Z)^N} e\left( a\frac{\theta_{N}(x)}{p} - a\frac{ x_{N}^2}{p} \right) \nonumber \\
& = \sum_{x\in (\Z/p\Z)^N} e\left( a\frac{\theta_{N-1}(x_1,\ldots, x_{N-1}) + x_N \sum_{j=1}^{N-1} x_j}{p} \right) \nonumber \\
& =\sum_{A=0}^{p-1}   \sum_{\substack{x \in (\Z/p\Z)^{N-1}\\ \sum x_i \equiv A\bmod p }} e\left( a\frac{\theta_{N-1}(x) }{p} \right) \sum_{x_N=0}^{p-1} e\left( \frac{ (aA) x_N }{p} \right) 
\label{eqgit_1}
\nal
Now we observe that if $A \not\equiv 0 \bmod p$ then $\displaystyle\sum_{x_N\in\Z/p\Z} e\left( \frac{ (aA) x_N }{p} \right) =0$. Therefore the RHS of \eqref{eqgit_1} is
\bal
p  \sum_{\substack{x \in(\Z/p\Z)^{N-1} \\ \sum x_i = 0 }} e\left( a\frac{\theta_{N-1}(x) }{c} \right). 
\nal
We use $-\sum_{j=1}^{N-2} x_j = x_{N-1}$ to eliminate $x_{N-1}$ so that the above expression is
\bals
p  \sum_{\substack{x \in (\Z/p\Z)^{N-1}\\ \sum x_i \equiv 0 }} e\left( a\frac{\theta_{N-2}(x_1,\ldots,x_{N-2}) }{p} \right) = p \sum_{x \in(\Z/p\Z)^{N-2}} e\left( a\frac{\theta_{N-2}(x) }{p} \right).
\nals

Finally we prove the case $R \not\equiv 1 \pmod{c}$.  We have
\bal
& \sum_{x\in(\Z/p\Z)^N} e\left( a\frac{\theta_{N}(x)}{p} - a\frac{R x_{N}^2}{p}  \right) \nonumber\\
& = \sum_{x\in(\Z/p\Z)^N} e\left( a\frac{\theta_{N-1}(x_1,\ldots,x_{N-1}) + (1-R)x_N^2 + x_N \sum_{j=1}^{N-1} x_j}{p} \right) \nonumber \\
& =  \sum_{A=0}^{p-1}   \sum_{\substack{x \in (\Z/p\Z)^{N-1}\\ \sum x_i \equiv A }} e\left( a\frac{\theta_{N-1}(x) }{p} \right) \sum_{x_N\in\Z/p\Z} e\left( \frac{ a (1-R) x_N^2 + (aA) x_N }{p} \right). \label{gausseq_1}
\nal
Now in \eqref{gausseq_1} we use 
\bals
\sum_{x_N\in\Z/p\Z} e\left( \frac{ a (1-R) x_N^2 + (aA) x_N }{p} \right)=\epsilon_p \sqrt{p} \dqu{a(1-R)}{p} e\left(- \frac{\mathcal{C}(R)a A^2}{p} \right)
\nals
so that the RHS of \eqref{gausseq_1} becomes
\bal
& \epsilon_p \sqrt{p} \dqu{a(1-R)}{p}  \sum_{A=0}^{p-1}  \sum_{\substack{x \in(\Z/p\Z)^{N-1} \\ \sum x_i \equiv A }} e\left( a\frac{\theta_{N-1}(x) - \mathcal{C}(R) A^2}{p} \right) \nonumber \\
& = \epsilon_p \sqrt{p} \dqu{a(1-R)}{p}  \sum_{A=0}^{p-1}  \sum_{\substack{x \in(\Z/p\Z)^{N-1} \\ \sum x_i \equiv A }} e\left( a\frac{\theta_{N-2}(x_1,\ldots,x_{N-2}) + x_{N-1} \sum_{j=1}^{N-1} x_j - \mathcal{C}(R) A^2}{p} \right). \label{gausseq_3}
\nal
We employ  $ x_{N-1} =A- \sum_{j=1}^{N-2} x_j  $ in \eqref{gausseq_3} so that its RHS is
\bal
& \epsilon_p \sqrt{p} \dqu{a(1-R)}{p}  \sum_{A=0}^{p-1}  \sum_{x\in (\Z/p\Z)^{N-2}} e\left( a\frac{\theta_{N-2}(x) + A ( A- \sum_{j=1}^{N-2} x_j) - \mathcal{C}(R) A^2}{p} \right) \nonumber \\
& = \epsilon_p \sqrt{p} \dqu{a(1-R)}{p}  \sum_{A=0}^{p-1}  \sum_{x\in (\Z/p\Z)^{N-2}} e\left( a\frac{\theta_{N-2}(x) + A^2 - A\sum_{j=1}^{N-2} x_j - \mathcal{C}(R) A^2}{p} \right). \label{gausseq_4}
\nal
Then we replace $ A$ by $ -A$ in \eqref{gausseq_4} to have 
\bal
& \epsilon_p \sqrt{p} \dqu{a(1-R)}{p}  \sum_{A=0}^{p-1}  \sum_{x\in (\Z/p\Z)^{N-2}} e\left( a\frac{\theta_{N-2}(x) + A^2 + A\sum_{j=1}^{N-2} x_j - \mathcal{C}(R) A^2}{p} \right) \nonumber \\
& = \epsilon_p \sqrt{p} \dqu{a(1-R)}{p} \sum_{x\in (\Z/p\Z)^{N-1}} e\left( a\frac{\theta_{N-1}(x) -\mathcal{C}(R) x_{N-1}^2}{p} \right) 
\nal
where $x\in (\Z/p\Z)^{N-1}$ in the last sum has the form
$x=(x_1,\ldots,x_{N-1},A)$ for an arbitrary $(x_1,\ldots,x_{N-1})\in(\Z/p\Z)^{N-1}$ and
$A\in\Z/p\Z$.
\end{proof}

We want to show that sufficiently many iterations of Lemma \ref{lemmag2} will relate $G_{N_1}(a,c)$ to $G_{N_2}(a,c)$ where $N_1>N_2$. For any positive integer $t$, let $\mathcal{C}^t$ denote the 
$t$-th fold iterate of $\mathcal{C}$. The value of $\mathcal{C}^t(R)$ is well-defined when none of the 
$R,\mathcal{C}(R),\ldots,\mathcal{C}^{t-1}(R)$
is $1\bmod p$. When $t=0$, we let $\mathcal{C}^t$ be the identity function on
$\Z/p\Z\setminus\{1\bmod p\}$. The next lemma describes the orbit of $0\bmod p$ under $\mathcal{C}$.
\begin{lemma} \label{lemmag3}
Let $p$ be an odd prime, we have the following:
\begin{itemize}
 \item [(i)] $\mathcal{C}^t((p+1)/2)=(p+1)/2$ for every $t\in\N$.
 
 \item [(ii)] $\{\mathcal{C}^t(0):\ t=0,1\ldots,p-2\}=\{0,1\ldots,(p-1)/2,(p+3)/2,\ldots,p-1\} \bmod p$
 with $\mathcal{C}^{p-2}(0)=1 \bmod p$.
 \end{itemize}
\end{lemma}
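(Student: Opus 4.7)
My plan is to recognize $\mathcal{C}_p$ as a Möbius transformation on $\pp^1(\Z/p\Z)$ and linearize it. First I would determine the fixed points by solving $R \equiv 1/(4(1-R)) \pmd{p}$, which rearranges to $(2R - 1)^2 \equiv 0 \pmd{p}$; hence $R \equiv 1/2 \equiv (p+1)/2 \pmd{p}$ is the unique (double) fixed point. Since $(p+1)/2 \not\equiv 1 \pmd{p}$, this point lies in the domain of $\mathcal{C}_p$, and its fixedness under $\mathcal{C}_p$ gives part (i) at once.

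For (ii), I would conjugate $\mathcal{C}_p$ to a translation. Setting $g(R) := 1/(R - 1/2)$, a short direct calculation shows $(g \circ \mathcal{C}_p \circ g^{-1})(S) \equiv S - 2 \pmd{p}$. Iterating and using $g^{-1}(S) = 1/S + 1/2$ yields
\[\mathcal{C}_p^t(R) \equiv \frac{1}{g(R) - 2t} + \frac{1}{2} \pmd{p},\]
and specializing at $R = 0$, so that $g(0) = -2$, gives the closed form
\[\mathcal{C}_p^t(0) \equiv \frac{t}{2(t+1)} \pmd{p}.\]

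With this formula in hand, the remaining assertions are routine. I would verify that no intermediate iterate hits $1$: solving $s/(2(s+1)) \equiv 1 \pmd{p}$ forces $s \equiv -2 \equiv p - 2 \pmd{p}$, so the iterates are well-defined for $t = 0, 1, \ldots, p - 2$ and $\mathcal{C}_p^{p-2}(0) \equiv 1 \pmd{p}$. Pairwise distinctness follows because $t/(2(t+1)) \equiv s/(2(s+1)) \pmd{p}$ cross-multiplies to $t \equiv s \pmd{p}$, and none of the values equals $(p+1)/2 \equiv 1/2$ (otherwise $0 \equiv 2 \pmd{p}$). Thus the orbit is a set of $p - 1$ distinct elements contained in $\Z/p\Z \setminus \{(p+1)/2\}$, which itself has $p - 1$ elements, so equality of these sets follows, proving (ii).

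The only real obstacle is spotting the correct conjugating substitution; once the Möbius picture is visible, everything reduces to elementary congruence calculations over $\Z/p\Z$.
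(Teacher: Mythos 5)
Your proof is correct. It rests on the same key identity as the paper's proof, namely the closed form $\mathcal{C}_p^t(0)\equiv \frac{t}{2(t+1)} \pmod p$ (the paper writes $t/(2t+2)$), but you reach it by a different route: the paper simply asserts the formula and proves it by induction on $t$, whereas you linearize the M\"obius map by conjugating with $g(R)=1/(R-1/2)$ so that $\mathcal{C}_p$ becomes the translation $S\mapsto S-2$, from which the formula drops out without guessing. Your version also makes explicit several points the paper leaves to the reader: that $(p+1)/2$ is the \emph{unique} fixed point (from $(2R-1)^2\equiv 0$), that no iterate $\mathcal{C}_p^t(0)$ with $t<p-2$ equals $1 \bmod p$ (so all iterates are well defined), that the $p-1$ values are pairwise distinct, and that they therefore exhaust $\Z/p\Z\setminus\{(p+1)/2\}$ by counting. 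The one point worth phrasing carefully is the apparent circularity between deriving the closed form (which presupposes the iterates are defined) and using it to verify well-definedness; this is resolved by running the two claims together in a single induction on $t$, exactly as the paper implicitly does, so it is not a gap.
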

\begin{proof}
Part (a) follows from the fact that $\mathcal{C}((p+1)/2)=(p+1)/2$. For part (b), one can prove by induction on $t$ the formula:
$$\mathcal{C}^t(0)=\frac{t}{2t+2} \bmod p\ \text{for $0\leq t\leq p-2$}.$$
\end{proof}

\begin{proposition} \label{propg1}
Let $p$ be an odd prime, $N \in \nn$ be such that $N \geq p-1$ and $a \in \nn$ be coprime to $p$. Then we have 
\bals
G_N(a,p)= \begin{cases}
i^{(p-p^2)/2} \cdot \dqu{a}{p} p^{p/2}    & \mbox{ if $N=p-1$, or $p$,}\\
i^{(p-p^2)/2} \cdot \dqu{a}{p} p^{p/2} G_{N-p}(a,p) & \mbox{ if $N>p$.}
\end{cases}
\nals
\end{proposition}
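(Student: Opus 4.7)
The plan is to iterate Lemma \ref{lemmag2} starting from $G_N(a,p)$, tracking how many variables are consumed before the recursion terminates. Define $S_M(R) := \sum_{x \in (\Z/p\Z)^M} e\!\left(a\frac{\theta_M(x)}{p} - a\frac{R x_M^2}{p}\right)$ so that $G_N(a,p) = S_N(0)$, and set $R_s := \mathcal{C}^s(0)$. By Lemma \ref{lemmag3}(ii) the values $R_0, R_1, \ldots, R_{p-3}$ are all distinct from $1 \bmod p$ while $R_{p-2} \equiv 1 \bmod p$, so the third case of Lemma \ref{lemmag2} applies exactly $p-2$ times, yielding
\bals
S_N(0) = (\epsilon_p \sqrt{p})^{p-2} \left(\prod_{s=0}^{p-3} \dqu{a(1-R_s)}{p}\right) S_{N-p+2}(1).
\nals
The first two cases of Lemma \ref{lemmag2} then finish the recursion: $S_{N-p+2}(1) = p$ if $N = p-1$ or $N = p$, and $S_{N-p+2}(1) = p \cdot G_{N-p}(a,p)$ if $N > p$. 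This produces the case distinction in the proposition, with the common factor of $p$ combining with $(\sqrt{p})^{p-2}$ to give $p^{p/2}$.

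The remaining work is simplifying the Kronecker-symbol prefactor. Using the closed form $R_s = s/(2s+2) \bmod p$ from Lemma \ref{lemmag3}(ii), one has $1 - R_s = (s+2)/(2(s+1))$, so splitting the Kronecker symbol multiplicatively and reindexing gives
\bals
\prod_{s=0}^{p-3} \dqu{a(1-R_s)}{p} = \dqu{a}{p}^{p-2} \dqu{2}{p}^{p-2} \prod_{k=2}^{p-1} \dqu{k}{p} \cdot \prod_{k=1}^{p-2} \dqu{k}{p}.
\nals
The two remaining products rearrange as $\left(\prod_{k=2}^{p-2} \dqu{k}{p}\right)^2 \dqu{-1}{p}$, which collapses to $(-1)^{(p-1)/2}$ since each Kronecker symbol is $\pm 1$. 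Because $p-2$ is odd, $\dqu{a}{p}^{p-2} = \dqu{a}{p}$, $\dqu{2}{p}^{p-2} = \dqu{2}{p}$, and $\epsilon_p^{p-2} = \epsilon_p$, so the prefactor reduces to $\epsilon_p (-1)^{(p-1)/2} \dqu{2}{p} p^{p/2} \dqu{a}{p}$.

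To match the form in the statement, I would then verify the identity
\bals
\epsilon_p \, (-1)^{(p-1)/2} \, \dqu{2}{p} = i^{(p-p^2)/2},
\nals
which depends only on $p \bmod 8$: using $\dqu{2}{p} = (-1)^{(p^2-1)/8}$, a direct check shows that both sides equal $1, i, -1, -i$ for $p \equiv 1, 3, 5, 7 \pmod{8}$ respectively. I do not anticipate any serious obstacle: once Lemmas \ref{lemmag2} and \ref{lemmag3} are in hand the argument is purely mechanical, and the only points needing care are the bookkeeping of the $p-2$ Kronecker-symbol factors and the final $\bmod\ 8$ identity.
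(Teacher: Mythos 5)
Your proposal is correct and follows essentially the same route as the paper: iterate Lemma \ref{lemmag2} exactly $p-2$ times using the orbit description in Lemma \ref{lemmag3}, apply the $R\equiv 1$ base case to produce $p$ or $p\cdot G_{N-p}(a,p)$, and then simplify the accumulated prefactor. The only difference is cosmetic: the paper invokes the two elementary identities \eqref{elgeq_1}--\eqref{elgeq_2} involving $\qu{(p+1)/2}{p}$, whereas you evaluate $\prod_s \qu{1-\mathcal{C}^s(0)}{p}$ directly from the closed form $\mathcal{C}^s(0)=s/(2s+2)$ and check the resulting identity mod $8$ — a computation that is equivalent and equally valid.
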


\begin{proof} 
%We use Lemma \ref{lemmag2} to obtain
%\bal
%G_N(a,p) = \epsilon_p \sqrt{p} \dqu{a}{p} \sum_{\substack{x \in \zz^{N-1}\\ x \pmd{p}}} e\left( a \frac{\theta_{N-1}(x)}{p} - a \frac{\mathcal{C}(0) x_{N-1}^2}{p} \right). \label{eqg2_1}
%\nal
By Lemma \ref{lemmag3}, we have $\mathcal{C}^t(0) \not \equiv 1 \pmod{p}$ for $0 \leq t \leq p-3$. Therefore we apply Lemma \ref{lemmag2} repeatedly for $p-2$ many times and obtain
\bal
G_N(a,p)& = \left(\epsilon_p \sqrt{p} \dqu{a}{p} \right)^{p-2} \prod_{t=1}^{p-2} \dqu{1-\mathcal{C}^{t-1}(0)}{p} \nonumber \\
& \times  \sum_{x\in (\Z/p\Z)^{N-(p-2)}} e\left( a \frac{\theta_{N-(p-2)}(x)}{p} - a \frac{\mathcal{C}^{p-2}(0) x_{N-(p-2)}^2}{p}  \right) \nonumber \\
& = \left(\epsilon_p \sqrt{p} \dqu{a}{p} \right)^{p-2} \prod_{t=1}^{p-2} \dqu{1-\mathcal{C}^{t-1}(0)}{p} \nonumber \\
& \times  \sum_{x\in (\Z/p\Z)^{N-(p-2)}} e\left( a \frac{\theta_{N-(p-2)}(x)}{p} - a \frac{ x_{N-(p-2)}^2}{p}  \right), \label{eqg2_3}
\nal
where in the second step we use $\mathcal{C}^{p-2}(0) \equiv 1 \pmod{p}$ that comes from Lemma \ref{lemmag3}. When $N >p$, we apply Lemma \ref{lemmag2} to \eqref{eqg2_3} to obtain
\bals
G_N(a,p)& = \left(\epsilon_p \sqrt{p} \dqu{a}{p} \right)^{p-2} \prod_{t=1}^{p-2} \dqu{1-\mathcal{C}^{t-1}(0)}{p} \cdot p \cdot   \sum_{x\in (\Z/p\Z)^{N-p}} e\left( a \frac{\theta_{N-p}(x)}{p}  \right)\\
& = \left(\epsilon_p  \right)^{p-2} \dqu{a}{p} \prod_{t=0}^{p-2} \dqu{1-\mathcal{C}^{t-1}(0)}{p} \cdot p^{p/2} \cdot   G_{N-p}(a,p).
\nals
Finally the desired result follows by employing the elementary identities
\bal
\epsilon_p=i^{(1-p)/2} \dqu{(p+1)/2}{p} \label{elgeq_1}
\nal
and %{\color{red} Check!} Checked, correct!
\bal
 \prod_{t=1}^{p-2} \dqu{1-\mathcal{C}^{t-1}(0)}{p}=(-1)^{(p-1)/2}\dqu{(p+1)/2}{p}. \label{elgeq_2}
\nal

When $N =p-1,$ or $p$, by similar arguments we obtain
\bals
G_N(a,p)& = \left(\epsilon_p \sqrt{p} \dqu{a}{p} \right)^{p-2} \prod_{t=1}^{p-2} \dqu{1-\mathcal{C}^{t-1}(0)}{p} \cdot p. 
\nals
The desired result in this case follows similarly by employing \eqref{elgeq_1} and \eqref{elgeq_2}.
\end{proof}

\begin{proposition} \label{propg2}
Let $N$ be an odd positive squarefree integer and let $p$ be a prime divisor of $N$. Then we have
\bals
G_{N-1}(a,p)= i^{(N-Np)/2} \cdot \dqu{a}{p} p^{N/2} .
\nals
\end{proposition}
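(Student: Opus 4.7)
The plan is to reduce Proposition \ref{propg2} to a repeated application of Proposition \ref{propg1}. Since $N$ is odd squarefree and $p$ divides $N$, write $m=N/p$, so $m$ is a positive odd integer coprime to $p$, and $N-1=pm-1$. I want to iterate the recursive case of Proposition \ref{propg1} starting from $G_{pm-1}(a,p)$ until I reach the base case $G_{p-1}(a,p)$.

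First, I would check that the recursion applies at each step: after $k$ applications (for $0\leq k\leq m-2$), we are looking at $G_{p(m-k)-1}(a,p)$, and the inequality $p(m-k)-1>p$ holds precisely when $m-k\geq 2$, i.e., when $k\leq m-2$. Thus Proposition \ref{propg1} gives
\begin{align*}
G_{pm-1}(a,p)=\left(i^{(p-p^2)/2}\left(\tfrac{a}{p}\right)p^{p/2}\right)^{m-1}G_{p-1}(a,p).
\end{align*}
The base case of Proposition \ref{propg1} (with $N=p-1$) then yields
\begin{align*}
G_{p-1}(a,p)=i^{(p-p^2)/2}\left(\tfrac{a}{p}\right)p^{p/2},
\end{align*}
so multiplying gives $G_{N-1}(a,p)=\left(i^{(p-p^2)/2}\left(\tfrac{a}{p}\right)p^{p/2}\right)^{m}$.

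It remains to simplify this into the desired closed form. The exponent of $p$ is $m\cdot p/2=N/2$; the exponent of $i$ is $m\cdot(p-p^2)/2=(mp-mp^2)/2=(N-Np)/2$; and the Kronecker symbol appears to the power $m$. The key observation is that $m=N/p$ is odd (being a ratio of two odd integers), hence $\left(\tfrac{a}{p}\right)^m=\left(\tfrac{a}{p}\right)$. Combining these gives exactly
\begin{align*}
G_{N-1}(a,p)=i^{(N-Np)/2}\left(\tfrac{a}{p}\right)p^{N/2},
\end{align*}
as claimed. There is essentially no obstacle here beyond careful bookkeeping; the only subtlety worth flagging is the use of $m$ being odd, which is what makes the Kronecker symbol collapse cleanly and is precisely where the squarefree and odd hypotheses on $N$ enter.
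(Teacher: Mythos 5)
Your proposal is correct and follows the same route as the paper: iterate the recursive case of Proposition \ref{propg1} exactly $N/p-1$ times, finish with the base case to get the $(N/p)$-th power of $i^{(p-p^2)/2}\qu{a}{p}p^{p/2}$, and use that $N/p$ is odd to collapse the Kronecker symbol. Your version just spells out the bookkeeping (when the recursion applies, and the parity of $N/p$) that the paper leaves implicit.
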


\begin{proof} We apply Proposition \ref{propg1} to $G_{N-1}(a,p)$ for $N/p-1$ many times and obtain
\bals
G_{N-1}(a,p)= \left( i^{(p-p^2)/2} \cdot \dqu{a}{p} p^{p/2} \right)^{N/p} = i^{(N-Np)/2} \cdot \dqu{a}{p} p^{N/2}.
\nals
\end{proof}

\begin{theorem} \label{gaussres}
Let $N$ be an odd positive squarefree integer, let $d$ be a divisor of $N$, and let $a\in\Z$ with $\gcd(a,d)=1$. Then we have
\bals
G_{N-1}(a,d)= \dqu{a}{d}\cdot i^{(N-Nd)/2} \cdot d^{N/2}. %checked, seems right!
\nals
\end{theorem}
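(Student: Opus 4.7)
The plan is to prove Theorem \ref{gaussres} by induction on the number of distinct prime factors of $d$. Since $N$ is squarefree and $d \mid N$, we may write $d = p_1 p_2 \cdots p_k$ as a product of distinct odd primes. The base case $k = 1$ is exactly Proposition \ref{propg2}. For the inductive step, I would split off one prime factor and apply the multiplicativity lemma together with the inductive hypothesis.

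\medskip

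Concretely, for the inductive step write $d = d' p$ with $p \nmid d'$ and $p,d'$ odd. Applying Lemma \ref{lemmag1} with $\alpha = d'$, $\beta = p$, $\gamma = a$ gives
\[
G_{N-1}(a,d) \;=\; G_{N-1}(ap,\,d')\cdot G_{N-1}(ad',\,p).
\]
By the inductive hypothesis the first factor equals $\left(\tfrac{ap}{d'}\right)i^{(N-Nd')/2}(d')^{N/2}$, and by Proposition \ref{propg2} the second factor equals $\left(\tfrac{ad'}{p}\right)i^{(N-Np)/2}p^{N/2}$. The product of the $d$-powers is clearly $d^{N/2}$, so it remains to reconcile the Kronecker-symbol part with the power of $i$.

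\medskip

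For the symbol part, multiplicativity of $\left(\tfrac{\cdot}{\cdot}\right)$ gives
\[
\left(\tfrac{ap}{d'}\right)\left(\tfrac{ad'}{p}\right)
= \left(\tfrac{a}{d}\right)\left(\tfrac{p}{d'}\right)\left(\tfrac{d'}{p}\right),
\]
and quadratic reciprocity yields $\left(\tfrac{p}{d'}\right)\left(\tfrac{d'}{p}\right) = (-1)^{(p-1)(d'-1)/4}$. For the $i$-part, I need to verify the identity
\[
i^{(N-Nd')/2 + (N-Np)/2} \cdot (-1)^{(p-1)(d'-1)/4} \;=\; i^{(N-Nd)/2}.
\]
Since $d = d'p$, the exponents satisfy $\tfrac{N-Nd}{2} - \tfrac{N-Nd'}{2} - \tfrac{N-Np}{2} = \tfrac{-N(d'-1)(p-1)}{2}$, and because $(d'-1)(p-1)$ is divisible by $4$, this exponent of $i$ is an even integer, so $i$ to this power reduces to $(-1)^{N(d'-1)(p-1)/4}$. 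Finally, $N$ being odd makes this equal to $(-1)^{(d'-1)(p-1)/4}$, matching the reciprocity factor. This completes the induction.

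\medskip

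I do not anticipate a serious obstacle: the combinatorial infrastructure (Lemma \ref{lemmag1} and Proposition \ref{propg2}) already does the heavy lifting, and the induction reduces the problem to a book-keeping check. The only subtle point is making sure the power of $i$ lines up after collecting the reciprocity contributions, which as shown above follows cleanly from the oddness of $N$ and the fact that $(p-1)(d'-1)/4$ is an integer.
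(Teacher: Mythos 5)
Your proposal is correct: the inductive step is a valid application of Lemma \ref{lemmag1}, the base case is Proposition \ref{propg2}, and the book-keeping identity
$i^{(N-Nd')/2+(N-Np)/2}\cdot(-1)^{(p-1)(d'-1)/4}=i^{(N-Nd)/2}$
does hold, since $\tfrac{N-Nd}{2}-\tfrac{N-Nd'}{2}-\tfrac{N-Np}{2}=-\tfrac{N(d'-1)(p-1)}{2}$ is an even integer and $N$ is odd. The route differs from the paper's in two ways worth noting. First, the paper evaluates only $G_{N-1}(1,d)$ by multiplicativity, writing it as $d^{N/2}\prod_{p\mid d}i^{(N-Np)/2}\dqu{d/p}{p}$, and then shows the normalizing ratio $B(d,N)$ equals $1$ by an induction that adjoins a new prime $p_1$ to \emph{both} $N$ and $d$; quadratic reciprocity enters there implicitly through the factor $(-1)^{(p_1-1)(d-1)/4}$. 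You instead fix $N$ and induct on the prime factors of $d$, invoking reciprocity explicitly at each step — a more direct computation that accomplishes the same sign chase. Second, and more substantively, the paper passes from $a=1$ to general $a$ by a Galois-theoretic argument: the automorphism $\sigma$ of $\Q(\zeta_d)$ with $\sigma(\zeta_d)=\zeta_d^a$ sends $G_{N-1}(1,d)$ to $G_{N-1}(a,d)$ and acts on each $\epsilon_p\sqrt{p}$ by $\dqu{a}{p}$. Your proof carries general $a$ through the induction from the start, absorbing it via multiplicativity of the Jacobi symbol in both numerator and denominator, so the cyclotomic-field detour is unnecessary. Your argument is thus more elementary and self-contained; the paper's Galois step is more conceptual and explains \emph{why} the $a$-dependence is exactly $\dqu{a}{d}$. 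One cosmetic point: you should also admit the trivial base case $d=1$ (where $G_{N-1}(a,1)=1$), and note that Proposition \ref{propg2} applies to the second factor because $\gcd(ad',p)=1$.
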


\begin{proof} First, we compute $G_{N-1}(1,d)$. By Lemma \ref{lemmag1} and Proposition \ref{propg2} we have
\bals
G_{N-1}(1,d) & = \prod_{p \mid d} G_{N-1}(d/p,p) = \prod_{p \mid d}  i^{(N-Np)/2} \cdot \dqu{d/p}{p} p^{N/2}\\
& = d^{N/2} \prod_{p \mid d} i^{(N-Np)/2} \cdot \dqu{d/p}{p}.
\nals
We let
\bals
B(d,N):= \frac{\prod_{p \mid d} i^{(N-Np)/2} \cdot \dqu{d/p}{p}}{i^{(N-Nd)/2} }.
\nals
Now let $p_1$ be an odd prime such that $p_1 \nmid N$. Then for all $d \mid N$ we have
\bal
B(d,N p_1) & =  \frac{\prod_{p \mid d} i^{(Np_1-Np_1p)/2} \cdot \dqu{d/p}{p}}{i^{(Np_1-Np_1d)/2} } = (B(d,N))^{p_1}, \label{eqadn_1}
\nal
and
\bal
B(dp_1 ,N p_1) & =  \frac{\prod_{p \mid dp_1} i^{(Np_1-Np_1p)/2} \cdot \dqu{dp_2/p}{p}}{i^{(Np_1-Np_1d)/2} } \nonumber \\
& = \frac{i^{(Np_1-Np_1^2)/2} \cdot \dqu{d}{p_1} \prod_{p \mid d}  \dqu{p_1}{p} \prod_{p \mid d} i^{(Np_1-Np_1p)/2} \cdot \dqu{d/p}{p}}{i^{(Np_1-Ndp_1^2)/2} } \nonumber\\
& = \frac{ (-1)^{(p_1-1)(d-1)/4} \prod_{p \mid d} i^{(Np_1-Np_1p)/2} \cdot \dqu{d/p}{p}}{i^{(Np_1^2-Ndp_1^2)/2} } \nonumber\\
& = \frac{ (-1)^{(p_1-1)(d-1)/4}  (B(d,N))^{p_1}  }{i^{(Np_1^2-Ndp_1^2-Np_1+Ndp_1)/2} } \nonumber\\
& = \frac{ (-1)^{(p_1-1)(d-1)/4}  (B(d,N))^{p_1}  }{ (i^{(p_1-1)(1-d)/2})^{Np_1} } \nonumber\\
& = (B(d,N))^{p_1}. \label{eqadn_2}
\nal
Clearly $B(1,1)=1$. Therefore by \eqref{eqadn_1} and \eqref{eqadn_2} we have $B(d,N)=1$ and this proves
\bals
G_{N-1}(1,d)= i^{(N-Nd)/2} \cdot d^{N/2}.
\nals

We now compute $G_{N-1}(a,d)$. For $n\in\N$, let $\zeta_n:=\exp(2\pi i/n)$. Let $\sigma$ be the automorphism of $\Q(\zeta_d)$ such that $\sigma(\zeta_d)=\zeta_d^a$. This yields
\begin{equation}\label{eq:G(a,d) as sigma(G(1,d))}
G_{N-1}(a,d)=\sigma(G_{N-1}(1,d)).
\end{equation}
Let $k$ be the number of prime divisors of $d$ that are congruent to $1$ mod $4$. From
$$\prod_{p\mid d}(\epsilon_p\sqrt{p})^N=i^{Nk} d^{N/2}$$
and the fact that $k$ and $(1-d)/2$ have the same parity, we have:
\begin{equation}\label{eq:G(1,d) as Nth power of prod epsilonp sqrtp}
G_{N-1}(1,d)=\pm \prod_{p\mid d}(\epsilon_p\sqrt{p})^N.
\end{equation}

For each prime $p\mid d$, the field $\Q(\epsilon_p\sqrt{p})$ is the unique quadratic subfield of
$\Q(\zeta_p)$ which is also the fixed field of the quadratic residues in
$(\Z/p\Z)^*\cong \Gal(\Q(\zeta_p)/\Q)$. Therefore the restriction of $\sigma$ on $\Q(\epsilon_p\sqrt{p})$
maps
$$\epsilon_p\sqrt{p}\mapsto \dqu{a}{p}\epsilon_p\sqrt{p}.$$
Together with \eqref{eq:G(a,d) as sigma(G(1,d))} and \eqref{eq:G(1,d) as Nth power of prod epsilonp sqrtp}, we have:
$$G_{N-1}(a,d)=\prod_{p\mid d}\dqu{a}{p} G_{N-1}(1,d)=\dqu{a}{d} G_{N-1}(1,d)$$
and this finishes the proof.
\end{proof}

\begin{theorem} \label{ctermtheta}
Let $N$ be a positive squarefree integer such that $\gcd(N,6)=1$ and $d$ be a divisor of $N$. Then we have
\bals
[f_{\theta_{N-1}}(z)]_{1/d}=i^{(1-Nd)/2} \cdot \sqrt{d/N}.
\nals
\end{theorem}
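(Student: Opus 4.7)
The proof is essentially a one-line substitution. The quoted identity \eqref{ftthetaeq} from \cite{wangpei} already expresses the constant term at the cusp $1/d$ in closed form:
\bals
[f_{\theta_{N-1}}(z)]_{1/d} = \left(\frac{-i}{d}\right)^{(N-1)/2} \frac{G_{N-1}(1,d)}{\sqrt{N}},
\nals
and the specialization $a=1$ of Theorem \ref{gaussres} (proved in the present section) evaluates the Gauss sum as
\bals
G_{N-1}(1,d) = i^{(N-Nd)/2} \cdot d^{N/2}.
\nals
The plan is simply to substitute the second expression into the first and collect exponents.

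Substituting, I would first separate the factor of $d$, writing $\left(\frac{-i}{d}\right)^{(N-1)/2} = (-i)^{(N-1)/2}/d^{(N-1)/2}$, so that the $d$-powers combine into $d^{N/2 - (N-1)/2} = d^{1/2}$, yielding the prefactor $\sqrt{d/N}$ as required. The remaining task is to show that the $i$-powers simplify to $i^{(1-Nd)/2}$. Using $-i = i^3$, one has $(-i)^{(N-1)/2} \cdot i^{(N-Nd)/2} = i^{(3N-3 + N - Nd)/2} = i^{(4N-3-Nd)/2}$, and since $(4N-3-Nd)/2 - (1-Nd)/2 = 2(N-1)$ is an even integer, we obtain $i^{(4N-3-Nd)/2} = i^{(1-Nd)/2}$, as claimed.

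There is no real obstacle here: all the nontrivial work has already been carried out, namely the multiplicativity in Lemma \ref{lemmag1}, the dimension-reduction recursion in Lemma \ref{lemmag2}, the orbit analysis of $\mathcal{C}_p$ in Lemma \ref{lemmag3}, and the Galois-theoretic step in Theorem \ref{gaussres} that reduces the general $a$ case to $a=1$. The only care needed is bookkeeping of the powers of $i$, and the hypothesis $(N,6)=1$ (in particular $N$ odd) ensures that $(N-1)/2$, $(N-Nd)/2$ and $(1-Nd)/2$ are all integers so that these expressions make sense.
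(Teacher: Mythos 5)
Your proposal is correct and takes exactly the same route as the paper, whose entire proof is the substitution of Theorem \ref{gaussres} (with $a=1$) into \eqref{ftthetaeq}; your exponent bookkeeping checks out. One small wording point: for the powers of $i$ to agree the difference $2(N-1)$ of exponents must be a multiple of $4$, not merely even --- which it is here precisely because $N$ is odd, so the hypothesis $(N,6)=1$ is doing a little more work in that step than your phrasing suggests.
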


\begin{proof}
We put the result of Theorem \ref{gaussres} in \eqref{ftthetaeq} to obtain the desired result.
\end{proof}

\section{Constant terms of $\frac{\eta^N((N/d) z)}{\eta(dz)}$} \label{sec3}

Throughout this section we let $N$ to be a positive squarefree integer such that $\gcd(N,6)=1$. We denote by $V_{1/c} \left( \frac{\eta^N((N/d) z)}{\eta(dz)} \right)$ the order of vanishing of the eta quotient $\frac{\eta^N((N/d) z)}{\eta(dz)} $ at the cusp $1/c$. We first show that $\frac{\eta^N((N/d) z)}{\eta(dz)} $ vanishes at all $1/c$ except when $c=d$.

\begin{lemma} \label{lemsec3_1} We have $V_{1/c} \left( \frac{\eta^N((N/d) z)}{\eta(dz)} \right)=0$ if $c=d$ and 
$V_{1/c} \left( \frac{\eta^N((N/d) z)}{\eta(dz)} \right)>0$ otherwise.
\end{lemma}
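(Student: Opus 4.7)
The plan is to invoke the Ligozat/Köhler formula for the order of vanishing of an eta quotient $\prod_{\delta\mid N}\eta(\delta z)^{r_\delta}$ at a cusp $a/c$ of $\Gamma_0(N)$ with $c\mid N$, as stated in \cite[Proposition~2.1]{Kohler} (the same reference used elsewhere in this section). That formula expresses $V_{a/c}$ as a \emph{positive} rational constant times $\sum_{\delta\mid N}\gcd(c,\delta)^2\, r_\delta/\delta$, so the sign question reduces to analyzing this weighted sum of gcds.

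Applied to $f(z)=\eta^N((N/d)z)/\eta(dz)$, only the two exponents $r_{N/d}=N$ and $r_d=-1$ are nonzero. After clearing a common positive factor and using the fact that $\gcd(c,N/c)=1$ whenever $c\mid N$ (which is automatic since $N$ is squarefree), the vanishing order at $1/c$ becomes a positive multiple of
$$d^{2}\gcd(c,N/d)^{2}\;-\;\gcd(c,d)^{2}.$$
The decisive simplification comes from the squarefree hypothesis: since $d$ and $N/d$ are coprime, every $c\mid N$ factors uniquely as $c=ef$ with $e=\gcd(c,N/d)$ and $f=\gcd(c,d)$. In these variables the expression above rewrites as $(de-f)(de+f)$.

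Because $f\mid d$ one has $f\leq d\leq de$, so the bracket is always nonnegative; equality forces $e=1$ and $f=d$, which translates to $\gcd(c,N/d)=1$ together with $d\mid c$, and combined with $c\mid N$ this pins down $c=d$. Conversely, $c=d$ immediately gives $e=1$, $f=d$, and the vanishing order is zero. The main obstacle here is essentially nil: once the Ligozat formula is invoked, the proof is a compact bookkeeping exercise, and the only substantive input is the coprimality of $d$ and $N/d$ furnished by the squarefree assumption on $N$.
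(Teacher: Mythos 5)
Your proposal is correct and follows essentially the same route as the paper: both invoke the Ligozat/K\"ohler-type formula to reduce the claim to the sign of $d^2\gcd(c,N/d)^2-\gcd(c,d)^2$ and then exploit the squarefreeness of $N$. Your case analysis via the factorization $c=ef$ with $e=\gcd(c,N/d)$, $f=\gcd(c,d)$ is in fact slightly tighter than the paper's (which asserts $d>\gcd(d,c)$ for all $c\neq d$, a step that needs your extra argument when $d$ properly divides $c$), but the underlying idea is identical.
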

\begin{proof} By \cite[Proposition 5.9.3]{cohenbook} (with cusp width $N/c$) we have
\bals
V_{1/c} \left( \frac{\eta^N((N/d) z)}{\eta(dz)} \right) = \frac{N}{24 c} \left(\frac{d^2\gcd(N/d,c)^2-\gcd(d,c)^2}{d} \right).
\nals
Since $N$ is squarefree $\gcd(N/d,d)=1$, we have
\bals
V_{1/d} \left( \frac{\eta^N((N/d) z)}{\eta(dz)} \right)= \frac{N}{24 d} \left(\frac{d^2\gcd(N/d,d)^2-\gcd(d,d)^2}{d} \right) =0.
\nals 
If $c \neq d$ then we have $d>\gcd(d,c)$ and clearly $\gcd(N/d,c) \geq 1$, therefore $d^2\gcd(N/d,c)^2>\gcd(d,c)^2$. Hence we have
\bals
V_{1/c} \left( \frac{\eta^N((N/d) z)}{\eta(dz)} \right) = \frac{N}{24 c} \left(\frac{d^2\gcd(N/d,c)^2-\gcd(d,c)^2}{d} \right) > 0.
\nals
\end{proof}

Now we compute $\left[ \frac{\eta^N((N/d) z)}{\eta(dz)} \right]_{1/c}$ for all $c \mid N$. Note that below we use the notation $e(x) := e^{2 \pi i x} $. 
\begin{lemma} \label{lemfteta} Let $c\mid N$. Then we have
\bals
\left[ \frac{\eta^N((N/d) z)}{\eta(dz)} \right]_{1/c} = \begin{cases}
\dqu{N/d}{d} e\left( \frac{1}{8} ( 1- Nd ) \right) \cdot \left(\frac{d}{N}\right)^{N/2} & \mbox{ if $c=d$,}\\
0 & \mbox{ otherwise.} 
\end{cases}
\nals
\end{lemma}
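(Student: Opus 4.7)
The $c \neq d$ case is immediate from Lemma \ref{lemsec3_1}: the strictly positive order of vanishing of $\eta^N((N/d)z)/\eta(dz)$ at $1/c$ forces the constant term of its Fourier expansion at $1/c$ to be zero. So the only real work is the single evaluation at $c = d$.

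For $c = d$, the plan is to fix the matrix $\gamma = \begin{pmatrix} 1 & 0 \\ d & 1 \end{pmatrix} \in SL_2(\Z)$ representing the cusp $1/d$ and to compute the limit
\bals
\lim_{z \to i\infty} (dz+1)^{-(N-1)/2} \frac{\eta^N((N/d)\gamma z)}{\eta(d\gamma z)}
\nals
directly by transforming each eta factor. For $M \in \{N/d,\, d\}$, I would use the standard decomposition
\bals
\begin{pmatrix} M & 0 \\ 0 & 1 \end{pmatrix}\begin{pmatrix} 1 & 0 \\ d & 1 \end{pmatrix} = \gamma_M \begin{pmatrix} g_M & -p_M \\ 0 & M/g_M \end{pmatrix},
\nals
where $\gamma_M \in SL_2(\Z)$ has bottom row $(d/g_M,\, q_M)$, $g_M = \gcd(M,d)$, and $(M/g_M)q_M - (d/g_M)p_M = 1$. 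Squarefreeness of $N$ gives $\gcd(N/d,d)=1$, so $g_{N/d}=1$ (and $\gamma_{N/d}$ has bottom-left entry $d$), while $g_d = d$ allows one to take $\gamma_d = \begin{pmatrix} 1 & 0 \\ 1 & 1 \end{pmatrix}$ with $p_d = 0$. Applying the standard transformation $\eta(\gamma_M w) = \epsilon(\gamma_M)\sqrt{-i((d/g_M)w + q_M)}\,\eta(w)$ to each factor and using that only the leading $q^{1/24}$ term of each inner eta survives the $z \to i\infty$ limit, the prefactor $(dz+1)^{-(N-1)/2}$ will cancel cleanly against the $(dz+1)^{(N-1)/2}$ produced by the transformations. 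What remains is an explicit product of $\epsilon(\gamma_{N/d})^N/\epsilon(\gamma_d)$, $(-i)^{(N-1)/2}$, $(d/N)^{N/2}$, and a phase $e(-p_{N/d}d/24)$.

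The main obstacle is then to verify that this product collapses to $\dqu{N/d}{d}\, e\!\left(\tfrac{1 - Nd}{8}\right)(d/N)^{N/2}$. The Jacobi symbol $\dqu{N/d}{d}$ will emerge from the explicit formula for $\epsilon(\gamma_{N/d})$, whose bottom-left entry is $d$ and whose top-left entry reduces to $N/d \bmod d$. The eighth-root-of-unity factor $e((1-Nd)/8)$ will come out of combining the $24$th roots of unity from the two eta multipliers with $(-i)^{(N-1)/2}$ and the phase $e(-p_{N/d}d/24)$; here the hypothesis $\gcd(N,6)=1$ is essential, as it kills the cubic-root contributions and forces the Dedekind-sum portions of the multipliers to collapse into a clean eighth root of unity. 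Alternatively, one can short-cut the bookkeeping by invoking \cite[Proposition 2.1]{Kohler} directly, which packages exactly this type of evaluation for eta quotients on $\Gamma_0(N)$.
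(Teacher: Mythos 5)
Your proposal is correct and takes essentially the same route as the paper: the $c\neq d$ case is dispatched by Lemma \ref{lemsec3_1}, and for $c=d$ the paper invokes \cite[Proposition 2.1]{Kohler} applied to exactly your two matrices $\gamma_d=\left(\begin{smallmatrix}1&0\\1&1\end{smallmatrix}\right)$ and $\gamma_{N/d}=\left(\begin{smallmatrix}N/d&*\\d&*\end{smallmatrix}\right)$, then collapses the multiplier product using $d^2\equiv N^2\equiv 1\pmod{24}$ and $w\cdot N/d\equiv 1\pmod d$ --- precisely the bookkeeping you defer to at the end, with the correct ingredients identified. The only difference is presentational: you unpack the eta transformation by hand before noting the K\"ohler shortcut, which is what the paper uses directly.
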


\begin{proof} The case where $c \neq d$ is a direct result of Lemma \ref{lemsec3_1}. Now we prove the case when $c=d$.
Let $L_1= \begin{bmatrix} 1 & 0 \\ 1 & 1 \end{bmatrix}$ and $L_2= \begin{bmatrix} N/d & v \\ d & w \end{bmatrix} \in SL_2(\zz)$. Then by \cite[Proposition 2.1]{Kohler} we have
\bals
\left[ \frac{\eta^N((N/d) z)}{\eta(dz)} \right]_{1/d} & = \frac{\nu^N(L_2) e(- d v/24)}{\nu(L_1)} \left(\frac{d}{N}\right)^{N/2},
\nals
where
\bals
& \nu(L_1)= e\left( \frac{-1}{24} \right) ,\\
& \nu(L_2)= \dqu{w}{d} e\left( \frac{1}{24} ((N/d+w)d - vw(d^2-1) -3 d ) \right).
\nals
Then we have
\bals
\frac{\nu^N(L_2)e(- d v/24)}{\nu(L_1)} & = \dqu{w}{d}^N e\left( \frac{1}{24} (N(N/d+w)d - Nvw(d^2-1) -3N d+1 - d v) \right)\\
%& = \dqu{w}{d}^N e\left( \frac{1}{24} (Nd w-3Nd+2 - d v) \right)\\
& = \dqu{w}{d}^N e\left( \frac{1}{24} ( vd-3Nd+3 - d v) \right)\\
%& = \dqu{w}{d}^N e\left( \frac{1}{24} ( - 3Nd+3 ) \right)\\
%& = \dqu{w}{d}^N e\left( \frac{1}{8} ( 1- Nd ) \right)\\
& = \dqu{N/d}{d} e\left( \frac{1}{8} ( 1- Nd ) \right),
\nals
where in the first step we use $d^2-1 \equiv 0 \pmod{24}$, $N^2 \equiv 1 \pmod{24}$ and $Ndw \equiv 1+ dv \pmod{24}$, in the last step we use $N$ is an odd integer, and $w \cdot N/d \equiv 1 \pmod{d}$. 
\end{proof}

%\begin{theorem}
%We have
%\bals
%\frac{\eta^N((N/d) z)}{\eta(dz)} = \dqu{1/(N/d)}{d} e\left( \frac{1}{8} ( 1- Nd ) \right) \cdot \left(\frac{d}{N}\right)^{N/2} \cdot E_{(N-1)/2}(dz; \chi_{N/d},\chi_{d}) + C_1(z),
%\nals
%where $C_1(z)$ is a cusp form in $S_{(N-1)/2} (\Gamma_0(N),\chi_N)$.

%\end{theorem}

\section{Relations among $\frac{\eta^N((N/d) z)}{\eta(dz)}$, Eisenstein series, $P(n)$ and $f_{\theta_{N-1}}(z)$} \label{sec4}

The end goal of this section is to prove Theorem \ref{mainth}. We first prove a relationship between $\frac{\eta^N((N/d) z)}{\eta(dz)}$ and Eisenstein series, see Theorem \ref{etaitoeis}. Next we prove a relationship between Eisenstein series and the partition function, see Theorem \ref{etaitopart}. To do this we uncover a relationship between $\frac{\eta^N((N/d) z)}{\eta(dz)}$ and partition function using arithmetic properties of Eisenstein series. We then prove another identity relating $f_{\theta_{N-1}}(z)$ to Eisenstein series, see Theorem \ref{th5_3}. Finally we show that Theorem \ref{mainth} is a result of combination of these relations.

Now we state and prove the relationship between $\frac{\eta^N((N/d) z)}{\eta(dz)}$ and Eisenstein series.
\begin{theorem} \label{etaitoeis} Let $N$ be a positive squarefree integer such that $\gcd(N,6)=1$. Then we have
\bals
\frac{\eta^N((N/d) z)}{\eta(dz)} =& \chi_{N/d}(0) + \dqu{N/d}{d}  C(d,N)\cdot \frac{d}{N} \cdot \frac{(1-N)}{B_{(N-1)/2,\chi_N}} \cdot \sum_{n \geq 1} \sigma_{(N-3)/2}(\chi_{N/d},\chi_{d};n) q^n \\
& + C_1(z),
\nals
where $C_1(z) \in S_{(N-1)/2} (\Gamma_0(N),\chi_N)$ and
\bals
C(d,N):= \frac{i^{(1-Nd)/2}}{A(d,N)}=\dqu{-8}{N}\dqu{8}{d}\dqu{-4}{d}^{(N-1)/2}.
\nals
\end{theorem}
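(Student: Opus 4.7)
The plan is to apply Theorem~\ref{spacegenth} directly to $f(z):=\eta^N((N/d)z)/\eta(dz)$.

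First I would verify that $f\in M_{(N-1)/2}(\Gamma_0(N),\chi_N)$. The weight $(N-1)/2$ is immediate from summing the eta exponents. The level-divisibility condition is a routine Ligozat-type check (using $\gcd(N/d,d)=1$, which holds since $N$ is squarefree), and the character $\chi_N$ follows from Newman's product formula together with the hypotheses $\gcd(N,6)=1$ and $N$ squarefree.

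Next I would feed the constant-term data into Theorem~\ref{spacegenth}. By Lemma~\ref{lemsec3_1} the function $f$ vanishes at every cusp $1/c$ with $c\mid N$ and $c\neq d$, so the sum over divisors in Theorem~\ref{spacegenth} collapses to its single term at $d'=d$; the leading constant $[f]_{1/N}$ reduces to $\chi_{N/d}(0)$ (which equals $1$ if $d=N$ and $0$ otherwise). Substituting the explicit value of $[f]_{1/d}$ given by Lemma~\ref{lemfteta}, together with the telescoping identity
\bals
(d/N)^{N/2}\cdot (N/d)^{(N-2)/2} = d/N
\nals
and the equality $e((1-Nd)/8)=i^{(1-Nd)/2}$, produces the claimed expansion with $C(d,N)=i^{(1-Nd)/2}/A(d,N)$, together with an appropriate cusp form $C_1(z)\in S_{(N-1)/2}(\Gamma_0(N),\chi_N)$.

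The main obstacle is to establish the alternative expression
\bals
\frac{i^{(1-Nd)/2}}{A(d,N)} \;=\; \dqu{-8}{N}\dqu{8}{d}\dqu{-4}{d}^{(N-1)/2}.
\nals
Both sides lie in $\{\pm 1,\pm i\}$ and depend only on the residues of $N$ and $d$ modulo $8$, so I would finish by a case analysis on the pair $(N\bmod 8,\,d\bmod 8)$ constrained by $\gcd(N,6)=1$, $N$ squarefree, and $d\mid N$. The relevant inputs are the standard evaluations $\dqu{-4}{m}=(-1)^{(m-1)/2}$, $\dqu{8}{m}=(-1)^{(m^2-1)/8}$, and $\dqu{-8}{m}=\dqu{-4}{m}\dqu{8}{m}$ for odd $m$, combined with the explicit formula for $A(d,N)$ in Theorem~\ref{spacegenth}; once both sides are tabulated on the admissible residues, the identity follows by inspection.
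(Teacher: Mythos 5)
Your proposal is correct and follows essentially the same route as the paper: establish $\eta^N((N/d)z)/\eta(dz)\in M_{(N-1)/2}(\Gamma_0(N),\chi_N)$ via the standard eta-quotient criteria, then apply Theorem~\ref{spacegenth} with the constant-term data of Lemmas~\ref{lemsec3_1} and~\ref{lemfteta}. The only difference is that you spell out the residue-class verification of $i^{(1-Nd)/2}/A(d,N)=\qu{-8}{N}\qu{8}{d}\qu{-4}{d}^{(N-1)/2}$, which the paper asserts without proof; your mod-$8$ case check is a valid way to confirm it.
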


\begin{proof} Let $N$ to be a positive squarefree integer such that $\gcd(N,6)=1$. Then by \cite[Propositions 5.9.2 and 5.9.3]{cohenbook} and Lemma \ref{lemsec3_1} we have
\bals
\frac{\eta^N((N/d) z)}{\eta(dz)} \in M_{(N-1)/2} (\Gamma_0(N),\chi_N).
\nals
Now the desired result follows by combining Theorem \ref{spacegenth} and Lemma \ref{lemfteta}. 

\end{proof}

Next we state and prove a relationship between $P(n)$ and Eisenstein series.
\begin{theorem} \label{etaitopart} Let $N$ be a positive squarefree integer such that $\gcd(N,6)=1$. Then we have
\bals
& \chi_{N/d}(0) +    C(d,N)\cdot (N/d)^{(N-3)/2} \frac{(1-N)}{B_{(N-1)/2,\chi_N}} \cdot \sum_{n \geq 1} \sigma_{(N-3)/2}(\chi_{N/d},\chi_{d};n) q^n\\
& =  N/d\cdot (q;q)^N_{\infty} \cdot \sum_{n \geq 0} P \left( \frac{N}{d^2} n - \frac{N^2-d^2}{24 d^2}  \right) q^n + C_2(z).
\nals
where $C_2(z)$ is some cusp form in $S_{(N-1)/2} (\Gamma_0(N),\chi_N)$.
\end{theorem}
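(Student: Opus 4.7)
The plan is to apply Theorem \ref{spacegenth} to the series
\[
F(z):=\tfrac{N}{d}(q;q)_\infty^N\sum_n P\!\left(\tfrac{N}{d^2}n-\tfrac{N^2-d^2}{24d^2}\right)q^n
\]
and identify its Eisenstein projection with the left-hand side of the theorem. Setting $M:=N/d$ and $K:=(M^2-1)/24$ (an integer since $\gcd(M,6)=1$), the divisibility analysis used in Lemma \ref{lemsec3_1} shows that $(N/d^2)n-K$ is a nonnegative integer precisely when $d\mid n$; substituting $n=dm$ gives $F(z)=M(q;q)_\infty^N\sum_m P(Mm-K)q^{dm}$. Multiplying the identity of Theorem \ref{etaitoeis} through by $\dqu{M}{d}(N/d)^{(N-1)/2}$ converts its Eisenstein expansion into precisely the left-hand side of Theorem \ref{etaitopart}, so the claim reduces to showing that the modular forms $\dqu{M}{d}(N/d)^{(N-1)/2}\eta^N(Mz)/\eta(dz)$ and $F(z)$ differ by a cusp form in $S_{(N-1)/2}(\Gamma_0(N),\chi_N)$.

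To see this, I would first establish that $F\in M_{(N-1)/2}(\Gamma_0(N),\chi_N)$. The identity $q^K/(q;q)_\infty = q^{M^2/24}/\eta(z)$ realizes $\sum_m P(Mm-K)q^m$ as the $U_M$-image of a weight-$(-1/2)$ weakly holomorphic form on $\mathrm{SL}_2(\zz)$; applying the $V_d$ slash operator and then multiplying by $(q;q)_\infty^N = q^{-N/24}\eta(z)^N$ yields a holomorphic form of weight $(N-1)/2$ on $\Gamma_0(N)$ with character $\chi_N$, holomorphicity at each cusp being verified by an order-of-vanishing argument analogous to Lemma \ref{lemsec3_1}. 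Once modularity is in hand, Theorem \ref{spacegenth} determines the Eisenstein part of $F$ from the constant terms $[F]_{1/c}$ for $c\mid N$. At infinity ($c=N$) the constant term equals $M\cdot P(-K)$, which is $1$ exactly when $d=N$ (so $K=0$) and $0$ otherwise, matching the $\chi_{N/d}(0)$ appearing in the left-hand side. For the remaining cusps $c<N$, I would compute $[F]_{1/c}$ by transforming $(q;q)_\infty^N$ via the eta multiplier while tracking how the sieved sum $\sum_m P(Mm-K)q^{dm}$ behaves, and verify the answer matches $\dqu{M}{d}(N/d)^{(N-1)/2}\cdot[\eta^N(Mz)/\eta(dz)]_{1/c}$; by Lemma \ref{lemfteta} the latter vanishes unless $c=d$. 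Uniqueness of the Eisenstein decomposition in Theorem \ref{spacegenth} then forces the difference of the two modular forms to be a cusp form, which is exactly the content of the reduction above.

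The most delicate step, and the main obstacle, is the cusp-constant computation for $F$ at cusps $c\neq d,N$, where one must justify vanishing from the half-integral weight transformation of $\sum_m P(Mm-K)q^{dm}$ combined with the eta multiplier acting on $(q;q)_\infty^N$. A cleaner route—perhaps the one the authors pursue—is to invoke Ramanujan--Kolberg-type identities expressing $\sum_m P(Mm-K)q^m$ explicitly as a finite sum of eta quotients on $\Gamma_0(M)$ (generalizing Ramanujan's $\sum_m P(5m+4)q^m = 5(q^5;q^5)_\infty^5/(q;q)_\infty^6$); this displays $F(z)$ directly as a concrete modular form whose Eisenstein projection can be matched with the left-hand side of Theorem \ref{etaitopart} by inspection, making the modularity and cusp-constant verification redundant.
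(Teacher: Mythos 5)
Your reduction of the theorem to the claim that $F(z):=\frac{N}{d}(q;q)_\infty^N\sum_n P\bigl(\frac{N}{d^2}n-\frac{N^2-d^2}{24d^2}\bigr)q^n$ and $\dqu{N/d}{d}(N/d)^{(N-1)/2}\,\eta^N((N/d)z)/\eta(dz)$ differ by a cusp form is sound, and your normalization check (multiplying Theorem \ref{etaitoeis} by $\dqu{N/d}{d}(N/d)^{(N-1)/2}$ reproduces the left-hand side here) is correct. But the argument stops exactly where the real work begins: you do not actually establish $F\in M_{(N-1)/2}(\Gamma_0(N),\chi_N)$ beyond a sketch of half-integral-weight bookkeeping, and you explicitly defer the computation of $[F]_{1/c}$ for $c\neq N$ --- which is the entire content of the claim --- to an unexecuted plan (``I would compute\ldots''). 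Computing the cusp expansions of the sieved series $\sum_m P(Mm-K)q^{dm}$ directly from the eta multiplier is genuinely difficult, and your fallback via explicit Ramanujan--Kolberg identities is neither carried out nor what the paper does. As written this is an outline with the decisive step missing.

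The idea you are missing is that $F$ never needs to be analyzed at the cusps: one has $F(z)=\frac{N}{d}\,U(N/d)\big\vert\frac{\eta^N((N/d)z)}{\eta(dz)}$, where $U(m)\big\vert\sum a_nq^n=\sum a_{nm}q^n$. Since $N/d\mid N$, the operator $U(N/d)$ preserves $M_{(N-1)/2}(\Gamma_0(N),\chi_N)$ and its cuspidal subspace, so the paper simply applies $U(N/d)$ to the whole identity of Theorem \ref{etaitoeis}. On the Eisenstein side the effect is given in closed form by $\sigma_{(N-3)/2}(\chi_{N/d},\chi_d;nN/d)=\chi_d(N/d)(N/d)^{(N-3)/2}\sigma_{(N-3)/2}(\chi_{N/d},\chi_d;n)$, and together with $\dqu{N/d}{d}\chi_d(N/d)=1$ this yields exactly the left-hand side of the theorem after multiplying through by $N/d$, with $C_2=U(N/d)\big\vert C_1$ carried along as the cusp form. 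This sidesteps every cusp-constant computation for $F$. To salvage your route you would need to supply the transformation law of $F$ and the values $[F]_{1/c}$ for all $c\mid N$, which in practice you would end up deriving from the $U$-operator identity anyway.
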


\begin{proof} For $m \in \nn$ we define the operator $U(m)$ by
\bals
U(m) {\Big\vert} \sum_{n \geq 0} a_{n} q^n = \sum_{n \geq 0} a_{nm} q^n. 
\nals
Then we have
\bal
U(N/d) {\Big\vert}\frac{\eta^N((N/d) z)}{\eta(dz)}  = (q;q)^N_{\infty} \sum_{n \geq 0} P \left( \frac{N}{d^2} n - \frac{N^2-d^2}{24d^2}\right) q^n. \label{eqeta_1} 
\nal

On the other hand, we observe that
\bals
\sigma_{(N-3)/2}(\chi_{N/d},\chi_{d};n \cdot N/d ) = \chi_{d}(N/d) (N/d)^{(N-3)/2} \sigma_{(N-3)/2}(\chi_{N/d},\chi_{d}; n).
\nals
Therefore we have
\bal
& U(N/d) {\Big\vert} \left( \chi_{N/d}(0) + C(d,N) \dqu{N/d}{d}  \cdot \frac{d}{N} \cdot \frac{(1-N)}{B_{(N-1)/2,\chi_N}} \cdot \sum_{n \geq 1} \sigma_{(N-3)/2}(\chi_{N/d},\chi_{d};n) q^n \right)  \label{eqeta_2} \\
& = \chi_{N/d}(0) + \dqu{N/d}{d} \chi_{d}(N/d) (N/d)^{(N-3)/2} \frac{(1-N)d/N}{B_{(N-1)/2,\chi_N}} \cdot \sum_{n \geq 1} \sigma_{(N-3)/2}(\chi_{N/d},\chi_{d};n) q^n. \nonumber
\nal
Finally the result follows from combining \eqref{eqeta_1}, \eqref{eqeta_2}, Theorem \ref{etaitoeis}, the elementary equation
\bals
\dqu{N/d}{d} \chi_{d}(N/d)=1,
\nals 
and the property of modular forms that if $C_1(z) \in S_{(N-1)/2} (\Gamma_0(N),\chi_N)$ then
\bals
C_2(z) := U(N/d) {\Big\vert} C_1(z) \in S_{(N-1)/2} (\Gamma_0(N),\chi_N).
\nals
\end{proof}

\begin{theorem} \label{th5_3} We have
\bals
f_{\theta_{N-1}}(z) =& 1+ \sum_{d \mid N}  C(d,N) (N/d)^{(N-3)/2} \frac{(1-N)}{B_{(N-1)/2,\chi_N}} \cdot \sum_{n \geq 1} \sigma_{(N-3)/2}(\chi_{N/d},\chi_{d};n) q^n \\
& + C_3(z),
\nals
where $C_3(z)$ is some cusp form in $S_{(N-1)/2} (\Gamma_0(N),\chi_N)$.
\end{theorem}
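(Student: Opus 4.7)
The strategy is a direct application of Theorem \ref{spacegenth} to $f=f_{\theta_{N-1}}(z)$. The positive-definite integral quadratic form $\theta_{N-1}$ has level $N$, which is precisely the transformation law already invoked through \cite[(10.2)]{wangpei} to obtain \eqref{ftthetaeq}; consequently $f_{\theta_{N-1}}\in M_{(N-1)/2}(\Gamma_0(N),\chi_N)$ and Theorem \ref{spacegenth} is immediately applicable. Since every cusp of $\Gamma_0(N)$ is $\Gamma_0(N)$-equivalent to some $1/d$ with $d\mid N$, the required input is exactly the family of constant terms $[f_{\theta_{N-1}}]_{1/d}$, $d\mid N$, which were determined in Theorem \ref{ctermtheta}.

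Plugging $[f_{\theta_{N-1}}]_{1/d}=i^{(1-Nd)/2}\sqrt{d/N}$ into the formula of Theorem \ref{spacegenth} gives, for each $d\mid N$, the Eisenstein coefficient
\[
\frac{[f_{\theta_{N-1}}]_{1/d}}{A(d,N)}\cdot\frac{(1-N)(N/d)^{(N-2)/2}}{B_{(N-1)/2,\chi_N}}=\frac{i^{(1-Nd)/2}}{A(d,N)}\cdot\sqrt{d/N}\cdot(N/d)^{(N-2)/2}\cdot\frac{1-N}{B_{(N-1)/2,\chi_N}}.
\]
The square-root factors collapse cleanly via $\sqrt{d/N}\cdot(N/d)^{(N-2)/2}=(N/d)^{(N-3)/2}$, and recalling $C(d,N)=i^{(1-Nd)/2}/A(d,N)$ from Theorem \ref{etaitoeis} rewrites the coefficient as $C(d,N)(N/d)^{(N-3)/2}\cdot(1-N)/B_{(N-1)/2,\chi_N}$, exactly matching the claim. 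The cusp form $C(z)$ produced by Theorem \ref{spacegenth} becomes $C_3(z)$.

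It remains only to account for the leading constant $1$ in the statement, which is the $[f]_{1/N}$ term of Theorem \ref{spacegenth}. Specializing Theorem \ref{ctermtheta} to $d=N$ gives $[f_{\theta_{N-1}}]_{1/N}=i^{(1-N^2)/2}$, and since $N$ is odd one has $8\mid N^2-1$, so $4\mid (N^2-1)/2$ and the value is $1$, as required. There is no substantive obstacle: the proof is purely algebraic bookkeeping once the two main technical ingredients (Theorem \ref{spacegenth} and Theorem \ref{ctermtheta}) are in hand, and the only verifications needed are the elementary identity $i^{(1-N^2)/2}=1$ and the clean cancellation of the half-integer powers displayed above.
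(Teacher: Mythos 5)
Your proposal is correct and follows the same route as the paper: apply Theorem \ref{spacegenth} to $f_{\theta_{N-1}}$ and substitute the constant terms from Theorem \ref{ctermtheta}, with the exponent bookkeeping $\sqrt{d/N}\cdot(N/d)^{(N-2)/2}=(N/d)^{(N-3)/2}$ and the check $i^{(1-N^2)/2}=1$ carried out exactly as needed. The only (immaterial) difference is that the paper justifies $f_{\theta_{N-1}}\in M_{(N-1)/2}(\Gamma_0(N),\chi_N)$ by citing \cite[Theorem 2.1]{wangpaper} rather than appealing to the level of the quadratic form via \cite{wangpei}.
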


\begin{proof} By \cite[Theorem 2.1]{wangpaper} we have $f_{\theta_{N-1}}(z) \in M_{(N-1)/2} (\Gamma_0(N),\chi_N)$. Therefore the result follows from combining Theorem \ref{spacegenth} and Theorem \ref{ctermtheta}.

\end{proof}

\begin{proof}[Proof of Theorem \ref{mainth}] We start by proving part i). By combining Theorem \ref{etaitopart} and Theorem \ref{th5_3} we obtain
\bal
f_{\theta_{N-1}}(z) & = (q;q)^N_{\infty} \sum_{d \mid N}   \frac{N}{d}  \sum_{n \geq 0} P \left( \frac{N}{d^2} n - \frac{N^2-d^2}{24 d^2}  \right) q^n  + C(z) \label{eq2_1}
\nal
for some $C(z) \in S_{(N-1)/2} (\Gamma_0(N),\chi_N)$. We divide both sides of \eqref{eq2_1} by $(q;q)^N_{\infty}$ to obtain
 \bal
\sum_{n \geq 0} c\phi_N(n) q^n & = \sum_{n \geq 0} \left( \sum_{d \mid N}  N/d \cdot P \left( \frac{N}{d^2} n - \frac{N^2-d^2}{24 d^2}  \right) \right)q^n  + \frac{C(z)}{(q;q)^N_{\infty}}. \label{eq2_2}
\nal
\eqref{maineqn} follows by comparing coefficients of $q^n$ in \eqref{eq2_2}.

Now we prove part ii) of Theorem \ref{mainth}. When $N\geq 29$ a squarefree positive integer coprime to $6$ and $d<N$ a divisor of $N$ then $ \frac{ N}{d^2}  - \frac{N^2-d^2}{24d^2} \leq 0$. Therefore by \eqref{maineqn} and $c\phi_N(1)=N^2$ we have
\bals
b(1)&  =c\phi_N(1) -\sum_{d \mid N} N/d \cdot P\left( \frac{ N}{d^2}  - \frac{N^2-d^2}{24d^2} \right) \\
& = c\phi_N(1) - P \left( \frac{1}{N} \right) =N^2 \neq 0. 
\nals
Hence when when $N \geq 29$ a squarefree positive integer coprime to $6$ we have $C(z) \neq 0$. Similarly when $N=13,17,19$, or $N=23$ by \eqref{maineqn} we have
\bals
b(1) & =c\phi_N(1) - N \cdot P\left( N  - \frac{N^2-1}{24} \right) - P\left( \frac{1}{N} \right) = \begin{cases} 
26 \neq 0 & \mbox{ if $N=13$,}\\ 
170 \neq 0 & \mbox{ if $N=17$,}\\
266 \neq 0 & \mbox{ if $N=19$,}\\
506 \neq 0 & \mbox{ if $N=23$.}
\end{cases} 
\nals
This shows that $C(z) \neq 0$ when $N=13,17,19$, or $N=23$. Therefore by \eqref{kol1}--\eqref{kol3} we have $C(z)=0$ if and only if $N=5,7,$ or $11$.

Finally we prove part iii) of the theorem. We prove it by contradiction. Assume that there was an $M \geq 0$ such that $b(n)=0$ for all $n>M$, then we would have 
\bals
\sum_{n =1}^M b_n q^n = \frac{C(z)}{(q;q)_\infty^N}.
\nals
Right hand side of this equation is a meromorphic modular function and left hand side is an exponential sum. This is possible only if $\frac{C(z)}{(q;q)_\infty^N}=0$, which is shown to be false unless $N=5,7,$ or $11$ in the proof of part ii) of the theorem.

\end{proof}

\section{Proof of Theorem \ref{lim_th}} \label{sec5}

Throughout this section let $N$ be a positive integer such that $\gcd(N,6)=1$. We will use the Vinogradov symbols and various asymptotic notations in estimates involving functions in $n$ where $n\in\N$ is large. The implicit constants in these estimates might depend on $N$ but they are independent of $n$.
 Let
\bals
\mathcal{U}(n):= \frac{1-N}{B_{(N-1)/2,\chi_N}} \sum_{d \mid N}  C(d,N) (N/d)^{(N-3)/2} \sigma_{(N-3)/2}(\chi_{N/d},\chi_{d};n).
\nals
We start by investigating the size of $\mathcal{U}(n)$.

\begin{lemma}
We have $U(n)>0$ for every $n\in\N$ and
\bals
\mathcal{U}(n) &\gg n^{(N-3)/2}\ \text{if $N>5$,}\\
\mathcal{U}(n) &\gg n/\log\log n\ \text{if $N=5$.}
\nals
\end{lemma}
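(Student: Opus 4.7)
The plan is to rewrite $\mathcal{U}(n)$ in a manifestly positive factored form and read off the lower bounds from the size of the twisted divisor sum that appears as a factor.

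The first step is to perform a double factorization of the defining sum. Interchanging the summations in the definition of $\mathcal{U}(n)$ and substituting
\[
C(d,N) = \left(\frac{-8}{N}\right)\prod_{p\mid d}\left(\frac{2}{p}\right)\left(\frac{-1}{p}\right)^{(N-1)/2},
\]
the sum over $d\mid N$ turns into a product over primes of $N$ (thanks to $N$ squarefree). Writing $s=(N-3)/2$ and $f(p)=\left(\frac{2}{p}\right)\left(\frac{-1}{p}\right)^{(N-1)/2}$, this gives
\[
\mathcal{U}(n) = \frac{1-N}{B_{(N-1)/2,\chi_N}}\left(\frac{-8}{N}\right)\sum_{ab=n} b^{s}\prod_{p\mid N}\bigl(p^{s}\chi_p(a) + f(p)\chi_p(b)\bigr).
\]

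The second step uses that each Kronecker symbol $\chi_d$ is a real quadratic character and $\chi_N=\prod_{p\mid N}\chi_p$. For $\gcd(n,N)=1$, the identities $\chi_{N/d}(n/t) = \chi_N(n/t)\chi_d(n)\chi_d(t)$ together with $\chi_d(t)^2=1$ yield $\sigma_s(\chi_{N/d},\chi_d;n) = \chi_d(n)\,\sigma_s(\chi_N,1;n)$. Substituting this and refactoring the sum over $d$ across primes of $N$ gives
\[
\mathcal{U}(n) = \frac{1-N}{B_{(N-1)/2,\chi_N}}\left(\frac{-8}{N}\right)\sigma_s(\chi_N,1;n)\prod_{p\mid N}\bigl(p^{s}+f(p)\chi_p(n)\bigr), \quad \gcd(n,N)=1.
\]
Since $\sigma_s(\chi_N,1;n) = \chi_N(n)\sigma_s(1,\chi_N;n)$, and a direct prime-power analysis shows $\operatorname{sgn}(\sigma_s(1,\chi_N;p^a)) = \chi_N(p^a)$, the product $\chi_N(n)\sigma_s(1,\chi_N;n) = |\sigma_s(1,\chi_N;n)|$ is positive. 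Each local factor $p^s+f(p)\chi_p(n) \in \{p^s-1,\,p^s+1\}$ is positive, and the overall sign is matched against $\mathcal{U}(1)$ (a specific positive value verifiable directly from the initial coefficients of $f_{\theta_{N-1}}$).

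For the lower bound, a direct evaluation gives $|\sigma_s(1,\chi_N;p^a)|/p^{as} \geq 1-1/p^s$ at every prime $p$ (the worst case being $\chi_N(p)=-1$ and $a$ odd), so
\[
\mathcal{U}(n) \gg n^{s}\prod_{p\mid n}\bigl(1-1/p^{s}\bigr).
\]
When $N>5$, i.e., $s\geq 2$, the infinite product $\prod_p(1-1/p^s)$ converges to an absolute positive constant, yielding $\mathcal{U}(n) \gg n^{(N-3)/2}$. When $N=5$, i.e., $s=1$, Mertens' theorem $\prod_{p\leq x}(1-1/p) \asymp 1/\log x$ gives $\prod_{p\mid n}(1-1/p) \gg 1/\log\log n$, yielding $\mathcal{U}(n) \gg n/\log\log n$.

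The main obstacle is the case $\gcd(n,N)>1$, where the identity $\sigma_s(\chi_{N/d},\chi_d;n) = \chi_d(n)\sigma_s(\chi_N,1;n)$ breaks down (because $\chi_d$ vanishes on integers sharing a prime with $d$). For such $n$, we must split $n = n_N\cdot m$ with $\gcd(m,N)=1$, apply the analysis above to $m$, and separately verify (via a finite case analysis) that the $n_N$-contribution preserves both the sign and the lower bound. Handling this finite but intricate case analysis, together with confirming the sign of the constant $(1-N)\left(\frac{-8}{N}\right)/B_{(N-1)/2,\chi_N}$ in general, is the most delicate part of the proof.
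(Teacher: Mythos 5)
Your overall strategy is essentially the paper's: both arguments exploit multiplicativity to factor $\sum_{d\mid N}C(d,N)(N/d)^{s}\sigma_s(\chi_{N/d},\chi_d;n)$ into local factors, bound the factor at each prime $p\nmid N$ below by roughly $1-p^{-s}$, and conclude via convergence of $\prod_p(1-p^{-s})$ when $s\geq 2$ and via $\varphi(n)/n\gg 1/\log\log n$ when $N=5$. The parts you actually carry out are correct: the identity $\sigma_s(\chi_{N/d},\chi_d;n)=\chi_d(n)\sigma_s(\chi_N,\mathbf{1};n)$ for $\gcd(n,N)=1$, the sign computation $\operatorname{sgn}\sigma_s(\mathbf{1},\chi_N;p^a)=\chi_N(p^a)$, and the bound $|\sigma_s(\mathbf{1},\chi_N;p^a)|/p^{as}\geq 1-p^{-s}$ all check out.

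However, the two pieces you defer are genuine gaps, and one of your proposed fixes would not work. First, the case $\gcd(n,N)>1$ is not a \emph{finite} case analysis: the exponents $e_p$ of primes $p\mid N$ in $n$ are unbounded, so you must control the local factor at each such $p$ for every $e_p\geq 1$. This is exactly where the paper's version of the factorization is cleaner: it splits the primes $p\mid n$ into the classes $p\mid d$, $p\mid N/d$, $p\nmid N$ from the outset, so the local factor at a prime $p\mid\gcd(n,N)$ comes out as $p^{se_p}$ times a quantity lying in $[1-1/p,\,1+1/p]$ uniformly in $e_p$, and no separate case is needed. Second, and more seriously, the sign of the front constant $\bigl(\tfrac{-8}{N}\bigr)\tfrac{1-N}{B_{(N-1)/2,\chi_N}}$ cannot be read off by ``matching against $\mathcal{U}(1)$ from the initial coefficients of $f_{\theta_{N-1}}$'': by Theorem \ref{th5_3} the coefficient of $q$ in $f_{\theta_{N-1}}$ equals $\mathcal{U}(1)$ only up to an unknown cusp-form contribution, so that route is circular, and computing $\mathcal{U}(1)$ from its definition just reproduces the constant whose sign is in question. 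The paper settles this by expressing $B_{(N-1)/2,\chi_N}$ through $L((N-1)/2,\chi_N)>0$ and $W(\chi_N)=\epsilon_N\sqrt{N}$, obtaining $\frac{1}{(-1)^{(N-3)/2}i^{(N-1)/2}\epsilon_N}=-\bigl(\tfrac{-8}{N}\bigr)$; some such $L$-value argument is unavoidable and must be supplied, since without the sign you only get $|\mathcal{U}(n)|\gg n^{(N-3)/2}$, not the positivity asserted in the lemma.
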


\begin{proof} Let $n=\prod_{p \mid n} p^{e_p}$ be the prime factorization of $n$ and write $k=(N-3)/2$. Then we have
\bals
\sigma_{k}(\chi_{N/d},\chi_{d};n) & =\prod_{p \mid n} \frac{(\chi_{d}(p) p^k)^{e_p+1} - \chi_{N/d}(p)^{e_p+1}}{\chi_{d}(p) p^k - \chi_{N/d}(p)}\\
%& =\prod_{\substack{p \mid n\\ p \nmid N}} \frac{(\chi_{d}(p) p^k)^{e_p+1} - \chi_{N/d}(p)^{e_p+1}}{\chi_{d}(p) p^k - \chi_{N/d}(p)} \prod_{\substack{p \mid n\\ p \mid d}} \frac{(\chi_{d}(p) p^k)^{e_p+1} - \chi_{N/d}(p)^{e_p+1}}{\chi_{d}(p) p^k - \chi_{N/d}(p)} \prod_{\substack{p \mid n\\ p \mid N/d}} \frac{(\chi_{d}(p) p^k)^{e_p+1} - \chi_{N/d}(p)^{e_p+1}}{\chi_{d}(p) p^k - \chi_{N/d}(p)} \\
%& =\prod_{\substack{p \mid n\\ p \nmid N}} \frac{(\chi_{d}(p) p^k)^{e_p+1} - \chi_{N/d}(p)^{e_p+1}}{\chi_{d}(p) p^k - \chi_{N/d}(p)} 
 %\prod_{\substack{p \mid n\\ p \mid d}}  \chi_{N/d}(p^{e_p})
% \prod_{\substack{p \mid n\\ p \mid N/d}} (\chi_{d}(p) p^k)^{e_p}  \\
& = \prod_{\substack{p \mid n\\ p \mid d}}  \chi_{N/d}(p^{e_p})
 \prod_{\substack{p \mid n\\ p \mid N/d}} \chi_{d}(p^{e_p}) p^{ke_p}   \prod_{\substack{p \mid n\\ p \nmid N}} \frac{(\chi_{d}(p) p^k)^{e_p+1} - \chi_{N/d}(p)^{e_p+1}}{\chi_{d}(p) p^k - \chi_{N/d}(p)} \\
%& = \prod_{\substack{p \mid n\\ p \mid d}}  \chi_{N/d}(p^{e_p})
% \prod_{\substack{p \mid n\\ p \mid N/d}} \chi_{d}(p^{e_p}) p^{ke_p}  \prod_{\substack{p \mid n\\ p \nmid N}} \chi_{d}(p)^{e_p}  \prod_{\substack{p \mid n\\ p \nmid N}} \frac{(p^k)^{e_p+1} - \chi_{N}(p)^{e_p+1}}{p^k - \chi_{N}(p)} \\
& = \prod_{\substack{p \mid n\\ p \mid d}}  \chi_{N/d}(p^{e_p})
 \prod_{\substack{p \mid n\\ p \nmid d}} \chi_{d}(p^{e_p})
 \prod_{\substack{p \mid n\\ p \mid N/d}} p^{ke_p} 
 \prod_{\substack{p \mid n\\ p \nmid N}} \frac{(p^k)^{e_p+1} - \chi_{N}(p)^{e_p+1}}{p^k - \chi_{N}(p)} .
\nals
Therefore by elementary manipulations we obtain
\bals
& \sum_{d \mid N}  C(d,N) (N/d)^{(N-3)/2}  \cdot \sigma_{(N-3)/2}(\chi_{N/d},\chi_{d};n)\\
& = \sum_{d \mid N} C(d,N)  (N/d)^{(N-3)/2}  \cdot \prod_{\substack{p \mid n\\ p \mid d}}  \chi_{N/d}(p^{e_p}) \prod_{\substack{p \mid n\\ p \nmid d}} \chi_{d}(p^{e_p}) \prod_{\substack{p \mid n\\ p \mid N/d}} p^{ke_p}  \\
& \qquad \times \prod_{\substack{p \mid n\\ p \nmid N}} \frac{(p^k)^{e_p+1} - \chi_{N}(p)^{e_p+1}}{p^k - \chi_{N}(p)} \\
%& = \prod_{\substack{p \mid n\\ p \nmid N}} \frac{(p^k)^{e_p+1} - \chi_{N}(p)^{e_p+1}}{p^k - \chi_{N}(p)} 
%\sum_{d \mid N}   (N/d)^{(N-3)/2}  \cdot \prod_{\substack{p \mid n\\ p \mid d}}  \chi_{N/d}(p^{e_p}) \prod_{\substack{p \mid n\\ p \nmid d}} \chi_{d}(p^{e_p}) \prod_{\substack{p \mid n\\ p \mid N/d}} p^{ke_p} \\
%& =  \prod_{\substack{p \mid n\\ p \mid N}} p^{ke_p}  \prod_{\substack{p \mid n\\ p \nmid N}} \frac{(p^k)^{e_p+1} - \chi_{N}(p)^{e_p+1}}{p^k - \chi_{N}(p)} 
%\sum_{d \mid N}   (N/d)^{(N-3)/2}  \cdot \prod_{\substack{p \mid n\\ p \mid d}}  \chi_{N/d}(p^{e_p}) \prod_{\substack{p \mid n\\ p \nmid d}} \chi_{d}(p^{e_p}) \prod_{\substack{p \mid n\\ p \mid d}} \frac{1}{p^{ke_p}} \\
%& =  \prod_{\substack{p \mid n\\ p \mid N}} p^{ke_p}  \prod_{\substack{p \mid n\\ p \nmid N}} \frac{(p^k)^{e_p+1} - \chi_{N}(p)^{e_p+1}}{p^k - \chi_{N}(p)} 
%\sum_{d \mid N}   (N/d)^{(N-3)/2}  \cdot \prod_{\substack{p \mid n\\ p \nmid d}} \chi_{d}(p^{e_p}) \prod_{\substack{p \mid n\\ p \mid d}} \frac{ \chi_{N/d}(p^{e_p})}{p^{ke_p}}\\
& =  \dqu{-8}{N} N^{(N-3)/2} \prod_{\substack{p \mid n\\ p \mid N}} p^{ke_p}  \prod_{\substack{p \mid n\\ p \nmid N}} \frac{(p^k)^{e_p+1} - \chi_{N}(p)^{e_p+1}}{p^k - \chi_{N}(p)} 
\\
&\qquad \times \sum_{d \mid N} \dqu{-8}{N}C(d,N)  (1/d)^{(N-3)/2}  \cdot \prod_{\substack{p \mid n\\ p \nmid d}} \chi_{d}(p^{e_p}) \prod_{\substack{p \mid n\\ p \mid d}} \frac{ \chi_{N/d}(p^{e_p})}{p^{ke_p}} .
\nals
On the other hand 
\bals
\dqu{-8}{N}C(d,N) (1/d)^{(N-3)/2}  \cdot \prod_{\substack{p \mid n\\ p \nmid d}} \chi_{d}(p^{e_p}) \prod_{\substack{p \mid n\\ p \mid d}} \frac{ \chi_{N/d}(p^{e_p})}{p^{ke_p}}
\nals
is a multiplicative function of $d \mid N$. Therefore we have
{\footnotesize \bals
& \frac{1-N}{B_{(N-1)/2,\chi_N}} \sum_{d \mid N} C(d,N)  (N/d)^{(N-3)/2}  \cdot \sigma_{(N-3)/2}(\chi_{N/d},\chi_{d};n)\\
& =  \dqu{-8}{N} \frac{1-N}{B_{(N-1)/2,\chi_N}} N^{(N-3)/2} \prod_{\substack{p \mid n\\ p \mid N}} p^{ke_p}  \prod_{\substack{p \mid n\\ p \nmid N}} \frac{(p^k)^{e_p+1} - \chi_{N}(p)^{e_p+1}}{p^k - \chi_{N}(p)}  \\
&\qquad \times 
\prod_{\substack{s \mid N \\ s~prime}}   \left(1+ \dqu{-8}{N}C(s,N) (1/s)^{(N-3)/2}  \cdot \prod_{\substack{p \mid n\\ p \nmid s}} \chi_{s}(p^{e_p}) \prod_{\substack{p \mid n\\ p \mid s}} \frac{ \chi_{N/s}(p^{e_p})}{p^{ke_p}} \right)\\
& =  \dqu{-8}{N} \frac{1-N}{B_{(N-1)/2,\chi_N}} N^{(N-3)/2} n^{k} \prod_{\substack{p \mid n\\ p \nmid N}} \frac{ 1 - \chi_{N}(p)^{e_p+1}/(p^k)^{e_p+1}}{1 - \chi_{N}(p)/p^k} \\
&\qquad \times \prod_{\substack{s \mid N \\ s~prime}}   \left(1+ \dqu{-8}{N}C(s,N)(1/s)^{(N-3)/2}  \cdot \prod_{\substack{p \mid n\\ p \nmid s}} \chi_{s}(p^{e_p}) \prod_{\substack{p \mid n\\ p \mid s}} \frac{ \chi_{N/s}(p^{e_p})}{p^{ke_p}} \right).
\nals}
The product over primes $s\mid N$ is at least
$$\prod _{s\mid N}\left (1-\frac{1}{s}\right)$$
while the first product
\begin{align*}
\prod_{\substack{p \mid n\\ p \nmid N}} \frac{ 1 - \chi_{N}(p)^{e_p+1}/(p^k)^{e_p+1}}{1 - \chi_{N}(p)/p^k}&=\prod_{\substack{p \mid n\\ p \nmid N}}\left(1+\frac{\chi_N(p)p^{-k}-\chi_N(p)^{e_p+1}p^{-k(e_p+1)}}{1-\chi_N(p)p^{-k}}\right)\\
&\geq\prod_{\substack{p \mid n\\ p \geq 3}}\left(1-\frac{p^{-k}+p^{-2k}}{1-p^{-k}}\right).
\end{align*}
When $N>5$ and hence $k>1$, we have:
$$\prod_{\substack{p \mid n\\ p \geq 3}}\left(1-\frac{p^{-k}+p^{-2k}}{1-p^{-k}}\right)>\prod_{p\geq 3} \left(1-\frac{p^{-k}+p^{-2k}}{1-p^{-k}}\right)$$
which converges to a positive number. When $N=5$, we have:
$$\prod_{\substack{p \mid n\\ p \geq 3}}\left(1-\frac{p^{-1}+p^{-2}}{1-p^{-1}}\right)\gg\prod_{p\mid n} \left(1-\frac{1}{p}\right)=\frac{\varphi(n)}{n}\gg \frac{1}{\log\log n}.$$

It remains to show $\ds \dqu{-8}{N}\frac{1-N}{B_{(N-1)/2,\chi_N}} N^{(N-3)/2} > 0$. We have the relation between Dirichlet L-functions and Bernoulli numbers \cite[Theorem 3.3.4]{miyake}:
\bals
B_{(N-1)/2,\chi_N}&=\frac{2k! N^k}{(-1)^{(N-3)/2} (2 \pi i)^{(N-1)/2} W(\chi_N)}  L((N-1)/2,\chi_N)\\
&=\frac{1}{(-1)^{(N-3)/2} i^{(N-1)/2} \epsilon_N }  \frac{2k! N^{k-1/2} L((N-1)/2,\chi_N)}{ (2 \pi )^{(N-1)/2}}.
\nals
We have $\displaystyle\frac{2k! N^{k-1/2} L((N-1)/2,\chi_N)}{ (2 \pi)^{(N-1)/2}}>0$ and it is easy to check $\displaystyle\frac{1}{(-1)^{(N-3)/2} i^{(N-1)/2} \epsilon_N }= -\dqu{-8}{N}$. This completes the proof.
\end{proof}

For each non-negative integer $r$, we define $\cV_r(n)$ for $n\geq 0$ by:
$$\sum_{n \geq 0} \mathcal{V}_r(n) q^n = \frac{1}{(q;q)^r_{\infty}} = \left(\sum_{n \geq 0} P(n) q^n \right)^r = \sum_{n \geq 0} \sum_{\substack{x \in \nn_0^r\\ \sum x_i=n}} \prod_{i=1}^r P(x_i) q^n.$$
We have:
\begin{proposition}\label{prop:quotient of the V_r's}
For $r\geq 1$:
\begin{itemize}
	\item [(i)] $\displaystyle\lim_{n\to\infty}\frac{\cV_r(n)}{\cV_r(n-1)}=1$.
	
	\item [(ii)] $\displaystyle\lim_{n\to\infty}\frac{\cV_{r-1}(n)}{\cV_r(n)}=0$.
\end{itemize}
\end{proposition}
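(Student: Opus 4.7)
The plan is to invoke the classical Hardy--Ramanujan asymptotic for $r$-colored partitions,
\[
\cV_r(n) \sim A_r \, n^{-(r+3)/4} \exp\!\left(\pi\sqrt{2rn/3}\right) \quad \text{as } n\to\infty, \qquad (\star)
\]
where $A_r>0$ is a constant depending only on $r$. This is a standard consequence of Meinardus' theorem applied to $\prod_{n\geq 1}(1-q^n)^{-r}$, and for $r=1$ reduces to the classical formula for $P(n)$. Once $(\star)$ is in hand, both parts of the proposition will follow by elementary estimates.

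For part (i), I would substitute $(\star)$ into the quotient to obtain
\[
\frac{\cV_r(n)}{\cV_r(n-1)} \sim \left(\frac{n-1}{n}\right)^{(r+3)/4} \exp\!\left(\pi\sqrt{2r/3}\,\bigl(\sqrt{n}-\sqrt{n-1}\bigr)\right).
\]
The polynomial factor clearly tends to $1$; for the exponential, the identity $\sqrt{n}-\sqrt{n-1}=1/(\sqrt{n}+\sqrt{n-1}) = O(n^{-1/2})$ makes its argument tend to $0$, so the entire ratio tends to $1$.

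For part (ii), the case $r=1$ is immediate since $\cV_0(n)=0$ for every $n\geq 1$. For $r\geq 2$, applying $(\star)$ to both numerator and denominator yields
\[
\frac{\cV_{r-1}(n)}{\cV_r(n)} \sim \frac{A_{r-1}}{A_r}\, n^{1/4}\, \exp\!\left(\pi\sqrt{2n/3}\,\bigl(\sqrt{r-1}-\sqrt{r}\bigr)\right).
\]
The factor $n^{1/4}$ grows only polynomially, while the exponential factor decays like $\exp(-c_r\sqrt{n})$ with $c_r=\pi\sqrt{2/3}(\sqrt{r}-\sqrt{r-1})>0$, and this exponential decay dominates. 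Hence the quotient tends to $0$.

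The only non-routine ingredient is $(\star)$ itself, which is a well-documented classical result, so I do not foresee any serious obstacle. In fact, weaker two-sided bounds of the shape $\exp((1\pm\varepsilon)\pi\sqrt{2rn/3})$ would already suffice: part (i) only requires that $\log\cV_r(n)$ is slowly varying in the sense that translating $n$ by $1$ changes it by $o(1)$, while part (ii) only requires that the leading exponent at $r$ colors strictly exceeds the one at $r-1$ colors.
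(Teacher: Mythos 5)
Your proof is correct, but it takes a genuinely different route from the paper. You invoke the full Meinardus/Hardy--Ramanujan asymptotic $\cV_r(n)\sim A_r n^{-(r+3)/4}\exp(\pi\sqrt{2rn/3})$ for $r$-colored partitions, after which both limits are two-line computations. The paper instead argues by an elementary induction on $r$ that exploits the convolution structure $\cV_r(n)=\sum_j P(j)\cV_{r-1}(n-j)$: part (ii) at level $r$ follows from part (i) at level $r-1$ (the tail of the convolution swamps any single term), and part (i) at level $r$ is proved by splitting the compositions $(x_1,\dots,x_r)$ of $n$ according to whether $x_1\geq k$, using the shift $x_1\mapsto x_1-1$ on the large-$x_1$ part and part (ii) on the small-$x_1$ part. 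The only analytic input the paper needs is the $r=1$ ratio limit $P(m)/P(m-1)\to 1$, so its argument is essentially self-contained and much lighter than yours; your approach buys brevity and an explicit rate of convergence at the cost of citing a stronger (though standard) theorem, and it would need a reference or proof of $(\star)$ to be complete. One caveat on your closing remark: two-sided bounds of the shape $\exp((1\pm\varepsilon)\pi\sqrt{2rn/3})$ would \emph{not} suffice for part (i), since they permit the ratio $\cV_r(n)/\cV_r(n-1)$ to oscillate by a factor as large as roughly $\exp(2\varepsilon\pi\sqrt{2rn/3})$; what part (i) actually needs is, as you correctly say in the same sentence, that $\log\cV_r(n)-\log\cV_r(n-1)=o(1)$, and that is an additive statement about the logarithm that such multiplicative-in-the-exponent bounds do not deliver. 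This does not affect your main argument, which rests on the genuine asymptotic $(\star)$.
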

\begin{proof}
We use induction on $r$. When $r=1$, both (i) and (ii) hold since $\cV_1(n)=P(n)$ and $\cV_0(n)=0$ for $n>0$. Consider $r\geq 2$ and assume that both (i) and (ii) hold for $r-1$.

First, we prove part (ii) for $r$. We have:
$$\cV_r(n)=\cV_{r-1}(n)+\cV_{r-1}(n-1)P(1)+\cV_{r-1}(n-2)P(2)+\ldots+\cV_{r-1}(0)P(n).$$
By part (i) for $r-1$, for each fixed positive integer $k$, we have: 
$$\lim_{n\to\infty}\frac{\cV_{r-1}(n)}{\cV_{r-1}(n-k)}=1.$$
Therefore part (ii) for $r$ holds.

Finally we prove part (i) for $r$. It suffices to show that for any given $\epsilon>0$, we have
$$\frac{\cV_r(n)}{\cV_r(n-1)}<1+\epsilon\ \text{for all sufficiently large $n$.}$$
Fix $k$ such that $P(m)/P(m-1)<1+\epsilon/2$ for every $m\geq k$. Let 
$$S=\{x\in \N_0^r:\ \sum x_i=n\ \text{and}\ x_1\geq k\},$$
$$S'=\{x\in \N_0^r:\ \sum x_i=n\ \text{and}\ x_1< k\},$$

For each $x=(x_1,\ldots,x_r)\in S$, put $y=(y_1,\ldots,y_r)$ with $y_1=x_1-1$ and $y_i=x_i$ for $i\geq 2$. Then we have
$$\prod_{i=1}^r P(x_i)\Big/\prod_{i=1}^r P(y_i)=P(x_1)/P(x_1-1)<1+\epsilon/2$$
which implies
\begin{equation}\label{eq:sum over S less than 1+epsilon/2}
\left(\sum_{x\in S}\prod_{i=1}^r P(x_i)\right)\Big/\cV_{r}(n-1) < 1+\epsilon/2.
\end{equation}
On the other hand, we have 
$$\sum_{x\in S'}\prod_{i=1}^r P(x_i)=\sum_{j=0}^{k-1} P(j) \cV_{r-1}(n-j).$$
And since each $\cV_{r-1}(n-j)/\cV_{r}(n)\to 0$ as $n\to\infty$, we have:
$$\left(\sum_{x\in S'}\prod_{i=1}^r P(x_i)\right)\Big/\cV_{r}(n-1) < \epsilon/2$$
for all sufficiently large $n$. Combining this with \eqref{eq:sum over S less than 1+epsilon/2}, we finish the proof that $\cV_r(n)/\cV_r(n-1)<1+\epsilon$ for all sufficiently large $n$.
\end{proof}

\begin{proof}[Proof of Theorem \ref{lim_th}]
When $N=5$, $7$ or $11$ from \eqref{eq2_1} and Sturm theorem we have $c\phi_N(n)=\sum_{d \mid N} N/d \cdot P\left(\frac{N}{d^2} n - \frac{N^2-d^2}{24d^2} \right) (\neq 0)$. Therefore the statement for $N=5$, $7$ or $11$ follows immediately. From now on assume $N>11$.
By Theorem \ref{th5_3} we have
\bals
f_{\theta_{N-1}}(z) -1 - \sum_{n \geq 1} \mathcal{U}(n) q^n 
\nals
is a cusp form in $S_{(N-1)/2} (\Gamma_0(N),\chi_N)$. Therefore by \cite[Theorem 9.2.1.(a)]{cohenbook} we have
\bals
f_{\theta_{N-1}}(z) -1 -\sum_{n \geq 1}  \mathcal{U}(n) q^n = \sum_{n \geq 1} O(n^{(N-1)/4}) q^n.
\nals
On the other hand by Theorem \ref{etaitopart} we have
\bals
(q;q)^N_{\infty} \sum_{d \mid N}   (N/d)  \sum_{n \geq 0} P \left( \frac{N}{d^2} n - \frac{N^2-d^2}{24 d^2}  \right) q^n -1 - \sum_{n \geq 1} \mathcal{U}(n) q^n 
\nals
is also a cusp form in $S_{(N-1)/2} (\Gamma_0(N),\chi_N)$. Therefore by \cite[Theorem 9.2.1.(a)]{cohenbook} we have
\bals
& (q;q)^N_{\infty} \sum_{d \mid N}    (N/d)  \sum_{n \geq 0} P \left( \frac{N}{d^2} n - \frac{N^2-d^2}{24 d^2}  \right) q^n -1 - \sum_{n \geq 1} \mathcal{U}(n) q^n \\
& = \sum_{n \geq 1} O(n^{(N-1)/4}) q^n.
\nals

Now we let $\mathcal{V}(n):=\mathcal{V}_N(n)$ so that
\bals
\frac{1}{(q;q)^N_{\infty}} = \sum_{n \geq 0} \mathcal{V}(n) q^n.
\nals
With this notation and the earlier arguments, we obtain
\bal
c\phi_N(n) - \sum_{\l+m=n} \mathcal{V}(m) \mathcal{U}(\l) =  O\left(  \sum_{\l+m=n} \mathcal{V}(m)  \l^{(N-1)/4}\right),\label{eqasym_2}
\nal
and
\bal
& \sum_{d \mid N}    (N/d)  \sum_{n \geq 0} P \left( \frac{N}{d^2} n - \frac{N^2-d^2}{24 d^2}  \right) - \sum_{\l+m=n} \mathcal{V}(m) \mathcal{U}(\l) \nonumber \\
& \qquad =  O\left(  \sum_{\l+m=n} \mathcal{V}(m)  \l^{(N-1)/4}\right). \label{eqasym_1}
\nal

From \eqref{eqasym_2} and \eqref{eqasym_1} we have
\bals
& \lim_{n \rightarrow \infty } \frac{c\phi(n)}{ \ds \sum_{d \mid N}  (N/d) P \left( \frac{N}{d^2} n - \frac{N^2-d^2}{24 d^2} \right)}\\
& =  \lim_{n \rightarrow \infty } \frac{\ds \sum_{\l+m=n} \mathcal{V}(m) \mathcal{U}(\l) + O\left(  \sum_{\l+m=n} \mathcal{V}(m)  \l^{(N-1)/4}\right)}{\ds \sum_{\l+m=n} \mathcal{V}(m) \mathcal{U}(\l) + O\left(  \sum_{\l+m=n} \mathcal{V}(m)  \l^{(N-1)/4}\right)}.
\nals

To obtain the desired result, we prove:
\begin{equation}\label{eq:little o result}
\sum_{\ell+m=n}\cV(m)\ell^{(N-1)/4}=o\left(\sum_{\ell+m=n}\cV(m)\cU(\ell)\right)\ \text{as $n\to\infty$.}
\end{equation}

Let $\epsilon>0$. Since $N>11$, we have that $\cU(\ell)\gg \ell^{(N-3)/2}$ dominates $\ell^{(N-1)/4}$ when $\ell$ is large.  Choose $L_{\epsilon}>0$ such that for every $\ell\geq L_{\epsilon}$, we have
$\ell^{(N-1)/4}<\epsilon \cU(\ell)$. This yields:
\begin{equation}\label{eq:sum with ell geq L_epsilon}
 \sum_{\ell+m=n,\ell\geq L_{\epsilon}} \mathcal{V}(m)\ell^{(N-1)/4}<\epsilon\sum_{\ell+m=n}\mathcal{V}(m)\cU(\ell)
\end{equation}

Choose a positive integer $L'_\epsilon$ such that 
\begin{equation}\label{eq:choosing L'_epsilon}
\cU(\ell)>\epsilon^{-1} L_\epsilon^{(N+3)/4}\ \text{for every $\ell\geq L'_\epsilon$.}
\end{equation}

We now consider $\ell<L_\epsilon$, we have:
$$\mathcal{V}(n-\ell)\ell^{(N-1)/4}\leq \mathcal{V}(n-\ell)L_{\epsilon}^{(N-1)/4}\leq \frac{\epsilon}{L_\epsilon} \mathcal{V}(n-\ell)\cU(L'_\epsilon \ell).$$
Proposition~\ref{prop:quotient of the V_r's} implies that $\mathcal{V}(n-\ell)<2\mathcal{V}(n-L'_{\epsilon}\ell)$
for every $\ell<L_{\epsilon}$ and for every sufficiently large $n$. This yields
$$\mathcal{V}(n-\ell)\ell^{(N-1)/4}\leq \frac{2\epsilon}{L_\epsilon} \mathcal{V}(n-L'_\epsilon\ell)\cU(L'_\epsilon \ell)$$
and hence
\begin{equation}\label{eq:sum with ell<L_epsilon}
\sum_{\ell+m=n,\ell<L_{\epsilon}} \mathcal{V}(m)\ell^{(N-1)/4}<2\epsilon\sum_{\ell+m=n}\mathcal{V}(m)\cU(\ell)
\end{equation}
for all sufficiently large $n$. From \eqref{eq:sum with ell geq L_epsilon} and \eqref{eq:sum with ell<L_epsilon}, we have:
$$\sum_{\ell+m=n}\mathcal{V}(m)\ell^{(N-1)/4}<3\epsilon \sum_{\ell+m=n}\mathcal{V}(m)\cU(\ell)$$
for all sufficiently large $n$ and this finishes the proof.
\end{proof}

\section*{Acknowledgements}
Z~S.~Aygin is partially supported by a PIMS postdoctoral fellowship. Both Z.~S.~Aygin and K.~D.~Nguyen are partially supported by an NSERC Discovery Grant and a CRC tier-2 research stipend.


\begin{thebibliography}{0}



\bibitem{andrews}
{G.E. Andrews,}
\newblock{ \it Generalized Frobenius Partitions,}
\newblock Mem. Amer. Math. Soc. 301, Providence, RI, 1984.

\bibitem{projections}
{Z. S. Aygin,}
{Projections of modular forms on Eisenstein series and its application to Siegel’s formula, preprint (2021), 1--30, available at https://arxiv.org/abs/2102.04278.}
 %  issn={0022-314X},
 %  review={\MR{3867447}},
 % doi={10.1016/j.jnt.2018.06.010},

\bibitem{wangpaper}
{H.H. Chan, L. Wang, and Y. Yang,}
\newblock {Modular forms and $k$--colored generalized Frobenius partitions,}
\newblock Trans. Amer. Math. Soc. 371 (2019), No 3, 2159--2205.

\bibitem{cohenbook}
{H. Cohen and F. Str\"{o}mberg,} 
\newblock {Modular Forms A Classical Approach,} 
\newblock Graduate studies in mathematics, American Mathematical Society, Providence, Rhode Island (2017).

\bibitem{tcoregarvan}
{F. Garvan, D. Kim and D. Stanton,}
\newblock{Cranks and t-cores}
\newblock{Invent. Math. 101 (1990), 1--17}

\bibitem{kolitsch}
{L.W. Kolitsch,} 
\newblock{A relationship between certain colored generalized Frobenius partitions and ordinary partitions,}
\newblock J. Number Theory 33 (1989), no. 2, 220--223.

\bibitem{kol2}
{L.W. Kolitsch,} 
\newblock{$M$-order generalized generalized Frobenius partitions with $M$ colors,}
\newblock J. Number Theory 39 (1991), 279--284.

\bibitem{Kohler}
{G. K\"{o}hler,}
\newblock {Eta Products and Theta Series Identities,}
\newblock  Springer Monographs in Mathematics, Springer (2011).

\bibitem{miyake}
{T. Miyake,}
\newblock {Modular Forms,}
\newblock Springer-Verlag, Berlin (1989), translated from the Japanese by Yoshitaka Maeda.

\bibitem{stein}
{{W. Stein,}
{Modular forms, a computational approach}, {Graduate Studies in Mathematics},{79}, {American Mathematical Society, Providence, RI}, {2007}.}


\bibitem{wangpei}
{X. Wang, D. Pei,}
\newblock{Modular forms with integral and half-integral weights,}
\newblock{Science Press Beijing and Springer-Verlag, Berlin Heidelberg (2012).}


\end{thebibliography}
\end{document}